\theoremstyle{plain}
\newtheorem{theorem}{Theorem}[section]
\newtheorem{lemma}[theorem]{Lemma}
\newtheorem{proposition}[theorem]{Proposition}
\newtheorem{corollary}[theorem]{Corollary}
\theoremstyle{definition}
\newtheorem{example}[theorem]{Example}
\theoremstyle{remark}
\newtheorem{remark}[theorem]{Remark}
\begin{document}
%\title[Equality of various graphs on finite semigroup]{Equality of various graphs on finite semigroup}
%\maketitle
%\section*{Introduction}
%******************
\title[Equality of various graphs on  finite semigroups]{Equality of various graphs on  finite semigroups}
\author[Sandeep Dalal, Jitender Kumar]{Sandeep Dalal, Jitender Kumar}
\address{Department of Mathematics, Birla Institute of Technology and Science Pilani, Pilani, India}
\email{deepdalal10@gmail.com,jitenderarora09@gmail.com}

%\date{...}
\begin{abstract}
In this paper, we consider various graphs, namely: power graph, cyclic graph, enhanced power graph and commuting graph, on a finite semigroup $S$. For an arbitrary pair of these four graphs, we classify finite semigroups such that the graphs in this pair are equal. In this connection, for each of the graph we also give a necessary and sufficient condition on $S$ such that it is complete.  The work of this paper generalize the corresponding results obtained for groups.
\end{abstract}

\subjclass[2010]{05C25}

\keywords{Monogenic semigroup, power graph, cyclic graph, enhanced power graph, commuting graph}

\maketitle

\section{Introduction}
The investigation of graphs associated to semigroups is a large research area. In 1964, Bos$\acute{\text{a}}$k \cite{b.Bosak} studied certain graphs over semigroups. Probably the most important class of graphs defined by semigroups is that of Cayley graphs (cf. \cite{a.Bevancayley,a.budden1985cayley,a.Cheninfintefamily2018,b.algebrasKelerav,a.trotter1978cartesian,a.witte1984survey}), since they have numerous applications (cf. \cite{a.kelarev2004cayleylabelled,a.kelarev2009cayley}). The concept of a (directed) power graph was first introduced by Kelarev and Quinn \cite{a.kelarev2000groups}. As explained in the survey \cite{a.abawajy2013power} and the book \cite{b.algebrasKelerav}, it is a standard practice to consider the class of undirected graphs as a subclass of the class of (directed) graphs (cf. \cite{b.algebrasKelerav}). Namely, a (directed) graph $\mathcal G = (V, E)$ is said to be \emph{undirected} if the implication $(u, v) \in E \Rightarrow (v, u) \in E$ holds, for all $u,v \in V$. Therefore, the  definition given in \cite{a.kelarev2000groups} also defined the power graph of all undirected graphs.  Further, the power graphs associated with groups and semigroups was studied in \cite{a.kelarev2002directed,a.kelarev2001powermatrices,a.kelarev2004semigroup}, and then in \cite{a.Cameron2010,a.Cameron2011,a.MKsen2009}. Recall that the undirected power graph of $S$ is the simple graph whose vertex set is $S$ and two distinct vertices $x, y$ are adjacent if either $x = y^m$ or $y = x^n$, for some $m, n \in \mathbb{N}$. The survey \cite{a.abawajy2013power} reports the current state of knowledge on the power graph associated with groups and semigroups.

Afkhami et al. \cite{a.afkhami2014cyclic} studied graph theoretic properties of the cyclic graph on a finite semigroup including its dominating number, independence number and genus. The \emph{cyclic graph} of a finite semigroup $S$ is the simple graph whose vertex set is $S$ and two distinct element $x, y$ are adjacent if $\langle x, y \rangle$ is a monogenic subsemigroup of $S$. The \emph{commuting graph} of a finite semigroup $S$ is the simple graph whose vertex set is $S$ and two distinct vertices $x, y$ are adjacent if $xy = yx$. The commuting graph of a finite group appears to be first studied by Brauer and Fowler in \cite{a.brauer1955groups} as a part of classification of finite simple groups. Since the elements of the center  are adjacent to all other vertices usually the vertices are assumed to be non-central. For more information on the commuting graphs of semigroups and groups, see \cite{a.Araujo2015,a.Araujo2011,a.Bauer2016} and the references therein.

In \cite{a.cameron2017}, Aalipour et al. characterized the finite groups such that the power graph of a finite group $G$ coincides with its commuting graph (with $G$ as a vertex set). If these two graphs of $G$ do not coincide, then to measure how much the power graph is close to the commuting graph of $G$, they introduced a new graph so called \emph{enhanced power graph} of a group $G$. The \emph{enhanced power graph} of a group $G$ is the simple graph whose vertex set is the group $G$ and two distinct vertices $x, y$ are adjacent if $x, y \in \langle z \rangle$ for some $z \in G$. In \cite{a.Bera2017}, Bera et al. studied the enhanced power graph of finite groups. Daniel et al. \cite{a.Dupont2017quotient} studied graph theoretic properties (connectivity, completeness etc.)  of the enhanced power graph of the quotient group $G/H$. Further, the rainbow connection number of the enhanced power graph of the group $G$ was calculated in \cite{a.Dupont2017}. The concept of the enhanced power graph on a semigroup can be defined analogously. To the best of our knowledge, the enhanced power graph on a finite semigroup is not studied so far.

In this paper, we initiate the study of enhanced power graph on a finite semigroup. The paper is arranged as follows. First, we provide necessary background material in Section 2. In Section 3, for each of the graph, viz. power graph, cyclic graph, enhanced power graph and commuting graph on a finite semigroup $S$, we provide a necessary and sufficient condition on $S$ such that it is complete. In Section 4, for an arbitrary pair of these four graphs, we classify finite semigroups such that the graphs in this pair are equal.

\section{Preliminaries}
\noindent We recall necessary definitions, results and notations of semigroup theory \cite{b.Howie} and graph theory \cite{b.West} which are used throughout in this paper. A \emph{semigroup} is a set with an
associative binary operation. A \emph{subsemigroup} of a semigroup is a subset that is also a
semigroup under the same operation. A semigroup $S$ is said to be \emph{commutative} if $xy = yx$ for all $x, y \in S$. An element $a$ of a semigroup $S$ is \emph{idempotent} if $a^2 = a$ and the set of all idempotents in $S$ is denoted by $E(S)$. For a subset $X$ of a semigroup $S$, the intersection of all the subsemigroups of $S$ containing $X$ is the smallest subsemigroup of $S$ containing $X$. It is denoted by $\langle X \rangle$ and known as subsemigroup generated by $X$. The subsemigroup $\langle X \rangle$ is the set of all the elements in $S$ that can be written as finite product of elements of $X$. If $X$ is finite then $\langle X \rangle$ is called finitely generated subsemigroup of $S$. A semigroup $S$ is called \emph{monogenic} if there exists $a \in S$ such that $S = \langle a \rangle$. Clearly, $\langle a \rangle = \{a^m \; : \; m \in \mathbb{N}\}$, where $\mathbb{N}$ is the set of positive integers.
%A semigroup is \emph{locally cyclic} if every finitely generated subsemigroup is monogenic.

For  $X \subseteq S$, the number of elements in $X$ is called the order of $X$ and it is denoted by $|X|$. The $\mathit{order}$ of an element $a\in S$, denoted by $o(a)$, is defined as  $|\langle a \rangle|$. In case of finite monogenic semigroup, there are repetitions among the powers of $a$. Then the set
\[\{x \in \mathbb{N} : (\exists \; y \in \mathbb{N}) a^x = a^y, x \ne y\}\]
is non-empty and so has a least element. Let us denote this least element by $m$ and call it the \emph{index} of the element $a$. Then the set
\[\{x \in \mathbb{N} \; : \; a^{m + x} = a^m \}\]
is non-empty and so it too has a least element $r$, which we call the \emph{period} of $a$. Let $a$ be an element  with index $m$ and period $r$. Thus, $a^m = a^{m + r}$. It follows that $a^m = a^{m + qr} \; \forall q \in \mathbb{N}$. By the minimality of $m$ and $r$ we may deduce that the powers
\[a, a^2, \ldots, a^m, a^{m + 1}, \ldots, a^{m + r-1}\]
are all distinct. For every $s \ge m$, by division algorithm we can write $s = m + qr + u$, where $q \ge 0$ and $0 \le u \le r-1$. then it follows that
\[a^s = a^{m + qr}a^u = a^m a^u = a^{m+u}.\]
Thus, $\langle a \rangle = \{a, a^2, \ldots, a^{m + r-1}\}$ and $o(a) = m + r - 1$. The subset \[K_a = \{a^m, a^{m+1}, \ldots, a^{m+r-1} \}\]  is a subsemigroup of $\langle a \rangle$. Indeed, $K_a$ is a cyclic subgroup of $\langle a \rangle$ with $|K_a| = r$ (cf. \cite{b.Howie}). Let $a$ be an element of a semigroup $S$ with index $m$ and period $r$. Then the monogenic semigroup $\langle a \rangle$ is denoted by $M(m, r)$. Also, sometimes $M(m, r)$ shall be written as
$\langle a : a^m = a^{m + r}\rangle$. The notations $m_a$ and $r_a$ denotes the index and period of $a$ in $S$, respectively. It is easy to observe that index of every element in a finite group $G$ is one. Consequently, for $a \in G$, we have $\langle a \rangle$ is the cyclic subgroup of $G$. The following results are useful in the sequel.

\begin{proposition}{\rm \cite[Proposition 1.2.3]{b.Howie}}
Every finite semigroup contains atleast one idempotent.
\end{proposition}

\begin{lemma}\label{gena-one idempotent}
Let $a$ be an element of a finite semigroup $S$. Then the subsemigroup $\langle a \rangle$ contains exactly one idempotent.
\end{lemma}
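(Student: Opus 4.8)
The plan is to use the structure of a finite monogenic semigroup $\langle a \rangle$ established in the preliminaries, namely that $\langle a \rangle = \{a, a^2, \ldots, a^{m+r-1}\}$ where $m$ is the index and $r$ the period of $a$, together with the fact that $K_a = \{a^m, a^{m+1}, \ldots, a^{m+r-1}\}$ is a cyclic subgroup of order $r$. Since $\langle a \rangle$ is itself a finite semigroup, the preceding Proposition guarantees it contains \emph{at least} one idempotent, so the entire content of the lemma is the \emph{uniqueness} of that idempotent.

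First I would locate an idempotent explicitly inside $K_a$. Because $K_a$ is a group of order $r$, it has an identity element $e$, and this $e$ is an idempotent of $\langle a \rangle$; concretely one can take $e = a^{k}$ where $k$ is the unique multiple of $r$ lying in the range $\{m, m+1, \ldots, m+r-1\}$, so that $e^2 = e$. This establishes existence independently of the earlier Proposition, but more importantly it pins down where any idempotent must live.

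The key step is to show that every idempotent of $\langle a \rangle$ already lies in $K_a$. Suppose $a^t$ is idempotent, i.e. $a^{2t} = a^t$. If $t < m$, then the powers $a, a^2, \ldots, a^{m+r-1}$ being all distinct would be violated: $a^t$ with $a^{2t} = a^t$ forces a repetition among powers with exponent below the index $m$, contradicting the minimality of $m$ as the index (the smallest exponent admitting a repetition). Hence $t \geq m$, and by the reduction $a^s = a^{m+u}$ for $s \geq m$ we may assume $a^t \in K_a$. Since $K_a$ is a group, it contains exactly one idempotent, namely its identity $e$. Therefore $a^t = e$, proving uniqueness.

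The main obstacle is the careful handling of the inequality $t < m$ and making the contradiction with the index precise, since one must argue directly from the definition of $m$ as the least exponent admitting $a^x = a^y$ with $x \neq y$ rather than appealing to group-theoretic intuition. Once any idempotent is forced into the group $K_a$, uniqueness is immediate from the elementary fact that a group has a unique idempotent (its identity), so I would structure the proof as: (1) reduce to idempotents in $K_a$, then (2) invoke uniqueness of the identity in $K_a$.
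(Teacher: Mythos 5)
Your proposal is correct and follows essentially the same route as the paper: observe that the group $K_a$ contains exactly one idempotent (its identity), then rule out idempotents $a^t$ with $t < m$ because $a^{2t} = a^t$ would make $t$ an exponent admitting a repetition, contradicting the minimality of the index $m$. Your explicit identification of the identity of $K_a$ as $a^k$ with $k$ the unique multiple of $r$ in $\{m, \ldots, m+r-1\}$ is a harmless addition; the core argument is the paper's.
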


\begin{proof}
Let $m$ and $r$ be the index and period of $a$, respectively. Thus, 
$\langle a \rangle = \{a, a^2, \ldots, a^{m + r-1}\}$. The subgroup $K_a = \{a^m, a^{m+1}, \ldots, a^{m+r-1} \}$ of $\langle a \rangle$ contains exactly one idempotent. If for $1 \le i < m$, $a^i$ is an idempotent, then we have $a^{2i} = a^i$. Consequently, $m \le i$; a contradiction. Hence, the idempotent element of $K_a$ is the only idempotent in $\langle a \rangle$.
\end{proof}

\begin{lemma}\label{cyclic implies monogenic}
A cyclic subgroup of a finite semigroup $S$ is a monogenic subsemigroup of $S$.
\end{lemma}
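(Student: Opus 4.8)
The plan is to take a cyclic subgroup $H$ of $S$ and exhibit a single element of $S$ that generates $H$ as a subsemigroup. Let $e$ denote the identity element of $H$ and let $g$ be a generator of $H$ as a cyclic group, so that $H = \{e, g, g^2, \ldots, g^{n-1}\}$ with $g^n = e$, where $n = |H|$. The natural candidate for the monogenic generator is $g$ itself, and I would prove that the subsemigroup $\langle g \rangle$ of $S$ generated by $g$ coincides with $H$.

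First I would establish $\langle g \rangle \subseteq H$. Since $H$ is a subsemigroup of $S$ (being closed under the operation of $S$) and $g \in H$, every positive power $g^k$ lies in $H$; as $\langle g \rangle = \{g^k : k \in \mathbb{N}\}$, this containment follows immediately.

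For the reverse containment $H \subseteq \langle g \rangle$, the key point — and the only genuine obstacle — is that $\langle g \rangle$ is built from positive powers of $g$ only, whereas the group $H$ is described a priori using the identity $e$ and inverses as well. This is resolved by finiteness: the identity appears as the positive power $e = g^n$, and every remaining element $g^k$ with $1 \le k \le n-1$ is already a positive power of $g$. Hence each element of $H$ lies in $\langle g \rangle$, which gives the containment.

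Combining the two containments yields $H = \langle g \rangle$, so $H$ is a monogenic subsemigroup of $S$. In the language of the preceding discussion, when $n > 1$ the element $g$ has index $m_g = 1$ and period $r_g = n$ in $S$, since $g = g^{1+n}$ while the powers $g, g^2, \ldots, g^n$ are pairwise distinct; thus $\langle g \rangle = M(1, n)$ with $|\langle g \rangle| = n = |H|$, confirming the equality. The case $n = 1$ is immediate, since then $H = \{e\}$ is monogenic, generated by the idempotent $e$.
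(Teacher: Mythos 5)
Your proof is correct and follows essentially the same route as the paper's: both exploit finiteness to write the identity (and hence every element of $H$, inverses included) as a positive power of the generator, so that the group-generated and semigroup-generated subsets coincide. The only cosmetic difference is that the paper computes $a^{-k} = a^{k(n-1)}$ explicitly while you enumerate $H = \{e, g, \ldots, g^{n-1}\}$ and note $e = g^n$; the underlying idea is identical.
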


\begin{proof}
Let $H$ be a cyclic subgroup of $S$. Then $H = \langle a \rangle$ for some $a \in S$. Since $H$ is finite so that $o(a) = n$ for some $n \in \mathbb N$. Thus $a^n = e$, where $e$ is the identity element of $H$. Consequently, $a^{-1} = a^{n-1}$. Now for any non-negative integer $k$, we get $a^{-k} = a^{k(n-1)}$. Thus, every element of $H$ is a positive power of $a$. Hence, $H$ is a monogenic subsemigroup of $S$.
\end{proof}

%\begin{theorem}{\rm \cite[Proposition 1.2.3]{b.Howie}}
%The semigroup $S$
%\end{theorem}

We also require the following graph theoretic notions. A graph $\mathcal{G}$ is a pair  $ \mathcal{G} = (V, E)$, where $V = V(\mathcal{G})$ and $E = E(\mathcal{G})$ are the set of vertices and edges of $\mathcal{G}$, respectively. We say that two different vertices $a, b$ are $\mathit{adjacent}$, denoted by $a \sim b$, if there is an edge between $a$ and $b$. It is clear that we are considering simple graphs, i.e. graphs with no loops or directed or repeated edges. A subgraph  of a graph $\mathcal{G}$ is a graph $\mathcal{G}'$ such that $V(\mathcal{G}') \subseteq V(\mathcal{G})$ and $E(\mathcal{G}') \subseteq E(\mathcal{G})$. A  subgraph $\mathcal{G}'$  of  graph $\mathcal{G}$ is said to be a $\emph{spanning subgraph}$ of $\mathcal{G}$ if $V(\mathcal{G}) = V(\mathcal{G}')$ and we shall write it as $\mathcal{G}' \preceq \mathcal{G}$. A graph $\mathcal{G}$ is said to be \emph{complete} if any two distinct vertices are adjacent. Let $S$ be a semigroup. The \emph{power graph} of $S$, denoted by ${\rm Pow}(S)$, is the simple graph whose vertex set is $S$ and two distinct elements are adjacent if one is a power of the other. The \emph{cyclic  graph} of $S$,  denoted by $\Gamma(S)$, is the simple graph whose vertex set is $S$ and two distinct element $x, y$ are adjacent if and only if $\langle x, y \rangle = \langle z \rangle$ for some $z \in S$. The \emph{enhanced power graph} of a semigroup $S$, denoted by $P_e(S)$, is the simple graph with vertex set $S$ and two distinct vertices are adjacent in $P_e(S)$ if there exists $z \in S$ such that $x, y \in \langle z \rangle$.  The \emph{commuting  graph} of $S$, denoted by $P_c(S)$, is the simple graph whose vertex set is $S$ and two different element $a, b$ are adjacent if $ab = ba$. The following result will be used at some points in the sequel.

\begin{theorem}{\rm \cite[Theorem 2.12]{a.MKsen2009}} \label{power graph of G - M K Sen}
Let $G$ be a finite group. Then ${\rm Pow}(G)$ is complete if and only if $G$ is a cyclic group of order $1$ or $p^m$, for some prime $p$ and $m \in \mathbb{N}$.
\end{theorem}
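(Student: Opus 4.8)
The plan is to prove both implications directly from the defining adjacency condition: two distinct elements $x,y$ are adjacent in ${\rm Pow}(G)$ precisely when $x \in \langle y \rangle$ or $y \in \langle x \rangle$.

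For the forward direction, assume ${\rm Pow}(G)$ is complete. First I would establish that $G$ is cyclic. I would pick an element $g \in G$ of maximum order and let $h$ be arbitrary. Completeness forces $h \in \langle g \rangle$ or $g \in \langle h \rangle$; in the latter case $o(g)$ divides $o(h)$, which together with the maximality of $o(g)$ yields $\langle g \rangle = \langle h \rangle$, so in either case $h \in \langle g \rangle$. Hence $G = \langle g \rangle$ is cyclic, say of order $n$. Next I would rule out more than one prime divisor of $n$. Supposing $n$ had two distinct prime divisors $p$ and $q$, the cyclic group $G$ would contain an element $a$ of order $p$ and an element $b$ of order $q$. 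These must be adjacent, so one lies in the cyclic subgroup generated by the other; but $a \in \langle b \rangle$ forces $p \mid q$ and $b \in \langle a \rangle$ forces $q \mid p$, each impossible for distinct primes. Thus $n = 1$ or $n = p^m$ for a single prime $p$, giving the stated form.

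For the converse, the trivial group is vacuously complete, so I would assume $G$ is cyclic of order $p^m$. The decisive fact is that the subgroups of a cyclic $p$-group are totally ordered by inclusion, forming a chain $\{e\} < H_1 < \cdots < H_m = G$. Consequently, for any two elements $x,y$ the cyclic subgroups $\langle x \rangle$ and $\langle y \rangle$ are comparable; whichever is contained in the other shows that one of $x,y$ is a power of the other, so $x \sim y$ and ${\rm Pow}(G)$ is complete.

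I expect no serious obstacle here; the only point requiring care is the converse, where completeness hinges entirely on the chain structure of the subgroup lattice of a $p$-group. This chain property fails as soon as two distinct primes divide $|G|$, which is exactly the configuration the forward direction exploits via the incomparable cyclic subgroups of prime orders $p$ and $q$.
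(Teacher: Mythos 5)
Your proof is correct. Note, however, that the paper does not prove this statement at all: it is imported verbatim as a known result, namely Theorem 2.12 of Chakrabarty, Ghosh and Sen (\emph{Undirected power graphs of semigroups}), and is used in the sequel as a black box, so there is no in-paper argument to compare against. Your two-step argument --- cyclicity via an element of maximal order, then exclusion of a second prime via incomparable subgroups of prime order, with the converse resting on the chain structure of the subgroup lattice of a cyclic $p$-group --- is the standard proof of this fact and is essentially the one given in the cited source; the only point worth polishing is that the maximal-order step should note explicitly that $g \in \langle h \rangle$ gives $\langle g \rangle \subseteq \langle h \rangle$, whence equality of these subgroups follows from $o(g) = o(h)$ by finiteness.
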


Throughout this paper $S$ is a finite semigroup, $G$ is a finite group, $C_n$ is the cyclic group of order $n$ and $\mathbb{N}_0 = \mathbb{N} \cup \{0\}$.

\section{Completeness of Graphs}
Let $\mathcal{K} = \{{\rm Pow}(S), \Gamma(S), P_e(S) , P_c(S) \}$ and $\Delta(S) \in \mathcal{K}$. In this section, we present a necessary and sufficient condition on $S$ such that $ \Delta(S)$ is complete. We begin with a relation between the elements of $\mathcal{K}$ in the following lemma.

\begin{lemma}\label{Spanning}
For a semigroup $S$, we have ${\rm Pow}(S) \preceq \Gamma(S) \preceq P_e(S) \preceq P_c(S)$.	
\end{lemma}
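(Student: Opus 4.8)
The plan is to observe that all four graphs in the chain share the same vertex set $S$, so that each relation $\mathcal{G}_1 \preceq \mathcal{G}_2$ amounts precisely to the edge-set inclusion $E(\mathcal{G}_1) \subseteq E(\mathcal{G}_2)$. Thus the whole lemma reduces to verifying the three inclusions
\[
E({\rm Pow}(S)) \subseteq E(\Gamma(S)) \subseteq E(P_e(S)) \subseteq E(P_c(S)),
\]
and I would treat each by taking an arbitrary edge $\{x,y\}$ of the smaller graph and unwinding its defining adjacency condition until it yields the defining condition of the larger graph.

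For the first inclusion, suppose $x \sim y$ in ${\rm Pow}(S)$. By the definition of the power graph one of the two vertices is a power of the other, so after possibly interchanging $x$ and $y$ I may assume $y = x^n$ for some $n \in \mathbb{N}$. Then $y \in \langle x \rangle$, and therefore $\langle x, y \rangle = \langle x \rangle$, which is a monogenic subsemigroup of $S$. Taking $z = x$, the equality $\langle x, y \rangle = \langle z \rangle$ holds, so $x \sim y$ in $\Gamma(S)$. For the second inclusion, if $x \sim y$ in $\Gamma(S)$ then $\langle x, y \rangle = \langle z \rangle$ for some $z \in S$; in particular $x, y \in \langle x, y \rangle = \langle z \rangle$, which is exactly the adjacency condition defining $P_e(S)$, so $x \sim y$ in $P_e(S)$.

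For the third inclusion, suppose $x \sim y$ in $P_e(S)$, so there is some $z \in S$ with $x, y \in \langle z \rangle$. Since $\langle z \rangle = \{z^k : k \in \mathbb{N}\}$, I may write $x = z^i$ and $y = z^j$ for suitable $i, j \in \mathbb{N}$. As distinct powers of a single element commute, I obtain
\[
xy = z^i z^j = z^{i+j} = z^{j+i} = z^j z^i = yx,
\]
whence $x \sim y$ in $P_c(S)$. Combining the three inclusions gives ${\rm Pow}(S) \preceq \Gamma(S) \preceq P_e(S) \preceq P_c(S)$.

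I do not expect a genuine obstacle here: every step is a direct translation between the relevant adjacency definitions. The only points that warrant a little care are keeping the ``distinct vertices'' convention consistent across all four simple graphs (so that an edge of one is literally an edge of another), and handling the asymmetry in the power-graph definition by the harmless interchange of $x$ and $y$ in the first step.
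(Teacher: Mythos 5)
Your proof is correct, and its overall structure matches the paper's: the same three edge-set inclusions, with the second and third handled by exactly the same unwinding of definitions (for $\Gamma(S) \preceq P_e(S)$, from $\langle x,y\rangle = \langle z\rangle$ conclude $x,y \in \langle z\rangle$; for $P_e(S) \preceq P_c(S)$, use that $\langle z\rangle$ is commutative). The only difference is in the first inclusion: the paper simply cites Theorem~3.13 of Afkhami et al.\ for ${\rm Pow}(S) \preceq \Gamma(S)$, whereas you prove it directly by noting that $y = x^n$ forces $\langle x,y\rangle = \langle x\rangle$, which is monogenic. Your version is more self-contained and just as short, so nothing is lost; it is the argument the cited theorem encapsulates anyway.
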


\begin{proof}
By \cite[Theoren 3.13]{a.afkhami2014cyclic}, note that Pow$(S) \preceq \Gamma(S)$. Now, suppose $a \sim b$  in $\Gamma(S)$. Then, for some $c \in S$, we have $\langle a, b \rangle = \langle  c \rangle$ . Consequently, $a, b \in \langle c \rangle$ so that $a  \sim b$  in $P_e(S)$. Thus, $\Gamma(S) \preceq P_e(S)$. In order to prove that $P_e(S)$ is a spanning subgraph of $P_c(S)$, suppose $a\sim  b$  in $P_e(S)$. Then $a, b \in \langle d \rangle$ for some $d \in S$. Since $\langle d \rangle$ is a commutative subsemigroup of $S$, we have  $ab = ba$ so that  $a \sim b$ in $P_c(S)$. Hence, $P_e(S) \preceq P_c(S)$.
\end{proof}

\begin{theorem}\label{enhanced-complete}
The enhanced power graph $P_e(S)$  is complete if and only if  $S$ is a monogenic semigroup.
\end{theorem}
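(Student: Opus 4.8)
The plan is to prove both implications, with the ``if'' direction being essentially definitional and the ``only if'' direction resting on a maximality argument that makes the finiteness of $S$ do all the work.

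For the ``if'' direction, I would suppose $S=\langle a\rangle$ is monogenic. Then any two distinct vertices $x,y\in S$ both lie in $\langle a\rangle$, so taking $z=a$ in the definition of the enhanced power graph immediately gives $x\sim y$. Hence every pair of distinct vertices is adjacent and $P_e(S)$ is complete. This direction needs no preliminary results.

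For the ``only if'' direction, I would assume $P_e(S)$ is complete and produce a single generator for $S$. Since $S$ is finite, the collection of monogenic subsemigroups $\{\langle z\rangle : z\in S\}$ is finite, so I may choose $w\in S$ for which $|\langle w\rangle|$ is maximum. The claim is that $S=\langle w\rangle$. To see this, take an arbitrary $x\in S$ with $x\neq w$. By completeness, $x\sim w$ in $P_e(S)$, so there exists $z\in S$ with $x,w\in\langle z\rangle$. Because $\langle w\rangle$ is the smallest subsemigroup of $S$ containing $w$ and $\langle z\rangle$ is a subsemigroup containing $w$, we get $\langle w\rangle\subseteq\langle z\rangle$, and therefore $|\langle w\rangle|\le|\langle z\rangle|$. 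The maximality of $|\langle w\rangle|$ forces $|\langle w\rangle|=|\langle z\rangle|$, and since these are finite sets with one contained in the other, $\langle w\rangle=\langle z\rangle$. Consequently $x\in\langle z\rangle=\langle w\rangle$. As $x$ was arbitrary (and $w\in\langle w\rangle$ trivially), this yields $S\subseteq\langle w\rangle\subseteq S$, so $S=\langle w\rangle$ is monogenic.

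The step that must be handled carefully is the cardinality/maximality argument: the passage from $w\in\langle z\rangle$ to $\langle w\rangle\subseteq\langle z\rangle$ uses that $\langle w\rangle$ is the \emph{smallest} subsemigroup containing $w$, and the upgrade from inclusion to equality uses finiteness together with the choice of $w$. I expect this to be the only nontrivial point; everything else is a direct application of the definitions. It is worth noting that this route does not require the idempotent lemmas or the spanning-subgraph relation of Lemma~\ref{Spanning}, although one could alternatively first deduce from $P_e(S)\preceq P_c(S)$ that $S$ is commutative and argue via its structure—the maximal monogenic subsemigroup argument above is more economical.
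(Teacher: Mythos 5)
Your proof is correct and follows essentially the same route as the paper: both arguments pick an element $w$ maximizing $o(w)=|\langle w\rangle|$, use completeness of $P_e(S)$ to place $w$ and any other element inside some $\langle z\rangle$, and conclude $\langle z\rangle=\langle w\rangle$ by maximality. The only difference is cosmetic (you argue directly rather than by contradiction, and you spell out the inclusion-plus-cardinality step that the paper leaves terse in the line ``Also note that $\langle z \rangle = \langle x \rangle$'').
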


\begin{proof} Let $S$ be a   monogenic semigroup. Then there exists $a \in S$ such that $S = \langle a \rangle$. For any  $x, y \in P_e(S)$, we have $x, y \in   \langle a \rangle$. Thus, by definition, $P_e(S)$ is complete. Conversely, suppose that  $P_e(S)$ is complete. Now choose an element $x \in S$ such that $o(x)$ is maximum. In order to prove that $S$ is monogenic, we show that  $S = H$, where $H = \langle x \rangle $. If $S \neq H$, then there exists $ y \in S$ but $y \notin H$. Since $P_e(S)$ is complete,  $x, y \in \langle z \rangle$ for some $z \in S$.  Also note that $\langle z \rangle = \langle x \rangle$. Consequently, $y \in \langle x \rangle$; a contradiction. Thus, $S= \langle x \rangle$. Hence, $S$ is a monogenic semigroup.
\end{proof}

\begin{theorem}\label{gamma-complete}
The cyclic graph $\Gamma(S)$ is complete if and only if one of the following holds:
\end{theorem}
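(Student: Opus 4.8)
The plan is to reduce to a monogenic semigroup and then decide completeness by inspecting the powers of a generator. Since $\Gamma(S)\preceq P_e(S)$ by Lemma~\ref{Spanning}, if $\Gamma(S)$ is complete then so is $P_e(S)$, and hence $S$ is monogenic by Theorem~\ref{enhanced-complete}. Thus I may assume $S=\langle a\rangle=M(m,r)$, with vertices $a,a^2,\dots,a^{m+r-1}$ and cyclic subgroup $K_a=\{a^m,\dots,a^{m+r-1}\}$ of order $r$. Completeness of $\Gamma(S)$ then amounts to the statement that $\langle a^i,a^j\rangle$ is monogenic for every pair $i<j$.

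I would first dispose of the pairs I expect to be automatically adjacent. If $i,j\ge m$, then $a^i,a^j\in K_a$ and the subsemigroup they generate is a finite subsemigroup of the group $K_a$, hence a subgroup; being a subgroup of a cyclic group it is cyclic, and so monogenic by Lemma~\ref{cyclic implies monogenic}. Also $a=a^1$ generates $S$, so $\langle a,a^j\rangle=\langle a\rangle=S$ is monogenic and $a$ is adjacent to every vertex. Consequently the only pairs that can break completeness involve the tail elements $a^i$ with $2\le i\le m-1$.

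The decisive step, and the one I expect to be the main obstacle, is the analysis of these tail elements. If $m\ge 4$ then both $a^2$ and $a^3$ are tail vertices, and I would show that $\langle a^2,a^3\rangle$ is not monogenic: if $\langle a^2,a^3\rangle=\langle a^k\rangle$, then $a^2,a^3\in\langle a^k\rangle$, and since $2,3<m$ the powers in the tail are distinct, forcing $k\mid 2$ and $k\mid 3$, hence $k=1$; but then $\langle a^k\rangle=S\ni a$, while every element of $\langle a^2,a^3\rangle$ has index at least $2$, a contradiction. Therefore no $M(m,r)$ with $m\ge 4$ has a complete cyclic graph, and completeness forces $m\le 3$.

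It remains to treat $m\le 3$. For $m=1$ (where $S$ is the cyclic group $C_r$) and $m=2$ (where the only tail vertex is $a$), the two harmless classes above cover all pairs and $\Gamma(S)$ is complete. For $m=3$ the only delicate pairs are $a^2$ against the group elements $a^j\in K_a$, and I would prove $a^2\sim a^j$ for every such $j$ if and only if $K_a\subseteq\langle a^2\rangle$: the same divisibility argument shows any generator of $\langle a^2,a^j\rangle$ has index dividing $2$ and cannot equal $a$, so it must be $a^2$, forcing $a^j\in\langle a^2\rangle$, while the converse is immediate. Finally I would turn $K_a\subseteq\langle a^2\rangle$ into a numerical condition: the elements $a^{2t}$ with $2t\ge m$ form a subgroup of $K_a$ of order equal to the number of distinct residues of $2t$ modulo $r$, namely $r/\gcd(2,r)$, so $\langle a^2\rangle$ exhausts $K_a$ precisely when $\gcd(2,r)=1$, that is, when $r$ is odd. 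Collecting the cases yields the classification: $\Gamma(S)$ is complete if and only if $S$ is a cyclic group, or $S\cong M(2,r)$, or $S\cong M(3,r)$ with $r$ odd.
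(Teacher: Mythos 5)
Your proposal is correct, and its skeleton matches the paper's: both reduce to a monogenic semigroup $S=M(m,r)$ via Lemma~\ref{Spanning} and Theorem~\ref{enhanced-complete}, both use the pair $(a^2,a^3)$ to eliminate $m\ge 4$, and both ultimately rest on the fact that a power $a^i$ with $i<m$ has a unique representation as a power of $a$. Where you genuinely diverge is the case $m=3$. The paper runs two separate ad hoc arguments: for $r$ even it shows by cases on $t$ that $\langle a^2,a^3\rangle\neq\langle a^t\rangle$, and for $r$ odd it verifies completeness directly from $a^3=(a^2)^{(3+r)/2}$. You instead prove the single biconditional that, for $j\ge 3$, $a^2\sim a^j$ in $\Gamma(S)$ if and only if $a^j\in\langle a^2\rangle$, and then compute $\bigl|K_a\cap\langle a^2\rangle\bigr|=r/\gcd(2,r)$, so that the parity of $r$ falls out as the exact obstruction, settling both directions of the equivalence at once. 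This is tidier and more informative (it identifies the whole neighbourhood of $a^2$ rather than exhibiting one non-edge), at the cost of a small counting lemma the paper does not need; likewise your up-front classification of the automatically adjacent pairs (pairs inside $K_a$ via Lemma~\ref{cyclic implies monogenic}, and pairs containing $a$) is only implicit in the paper's converse cases. One cosmetic flaw: you say elements of $\langle a^2,a^3\rangle$ have ``index at least $2$'' and that a generator has ``index dividing $2$''; the paper reserves \emph{index} for $m_a$, and what your argument actually uses is the exponent $n$ in $a^n$ together with uniqueness of representation below the index --- with that reading, the divisibility steps are sound.
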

\begin{enumerate}
\item[(i)] $S = \langle a  : a^{1 + r} = a \rangle$.
\item[(ii)]$ S =  \langle a  : a^{2+r} = a^2 \rangle$.
\item[(iii)] $ S = \langle a  : a^{3+r} = a^3 \rangle$ where $r$ is odd.
\end{enumerate}

\begin{proof}
Suppose that $\Gamma(S)$ is complete. Since $\Gamma(S)$ is a spanning subgraph of  $P_e(S)$ (cf. Lemma \ref{Spanning})  so that  $P_e(S)$ is complete. Consequently, $S$ is monogenic (cf. Theorem \ref{enhanced-complete}).
Clearly, $S = M(m,r)$ for some $m, r \in \mathbb{N}$. On contrary, suppose that $S$ is not of the form given in (i), (ii)  and  (iii).
Then either $S = M(3, r)$ such that $r$ is even or $S = M(m, r)$, where $m\geq 4$.

If $S = M(3, r)$ such that $r$ is even, then clearly $3 + r - 1$ and $3 + r + 1$ are even. Since $a^{3+r} = a^3$ implies $a^{4+r} = a^4$ so that $\langle a^2 \rangle = \{a^2, a^4, \ldots, a^{2+r}\}$. Note that $a^3 \notin \langle a^2 \rangle$. If $a^2 \in \langle a^3 \rangle$, then $a^2 = a^{3k}$ for some $k\in \mathbb N$. Thus $m \leq 2$; a contradiction for $m = 3$. Consequently, $a^2 \notin \langle a^3 \rangle$.
Let if possible $\langle a^2, a^3 \rangle = \langle a^t \rangle $ for some $a^t \in S$. We now show that no such $t \in \mathbb{N}$ exists. If $t = 1$, then $\langle a^2, a^3 \rangle  = \langle a \rangle$ so that
$a = a^l$, where $l \ge 2$. Thus, $m = 1$; a contradiction. If $t \in \{2, 3 \}$, then either $a^2 \in \langle a^3 \rangle$ or $a^3 \in \langle a^2 \rangle$; again a contradiction. Thus, we have $\langle a^2,a^3 \rangle = \langle a^t \rangle$ such that $t > 3$. Since $a^2 \in \langle a^2,a^3 \rangle = \langle a^t \rangle$ so that $a^2 = (a^t)^k$ for some $k \in \mathbb N$. Consequently, $ m \leq 2$; a contradiction. Thus, $\langle a^2, a^3 \rangle$ is not a monogenic subsemigroup of $S$ implies $a^2 $ is not adjacent  to $a^3$ in $\Gamma(S)$ so that $\Gamma(S)$ is not complete which is a contradiction. \\
We may now suppose $S = M(m, r)$, where $m \ge 4$. In this case, first note that $a, a^2, a^3, a^4$  all are distinct elements of $S$. Now, we show that $\langle a^2, a^3 \rangle$ is not a monogenic subsemigroup of $S$ so that $a^2$ and $a^3$ are not adjacent in $\Gamma(S)$, which is a contradiction of the fact $\Gamma(S)$ is complete. If possible, let  $\langle a^2, a^3 \rangle = \langle a^i \rangle$ for some $a^i \in S$. If $i = 1$, then $a \in \langle a^2, a^3 \rangle$. Thus, $a = a^t$, where $t \ge 5$ so that $m = 1$;  a contradiction. For $i = 2$, note that $a^3 \in \langle a^2 \rangle$ gives $a^3 = a^{2k}$ for some $k \ge 3$. Thus $ m \leq 3$; a contradiction. If $i \ge 3$, then $a^2 \in \langle a^i \rangle$, which implies that $a^2 = a^{ik}$ for some $k \in \mathbb N$. Thus $ m \leq 2$; again a contradiction.

Conversely, suppose $S$ is one of the form given in (i), (ii)  and  (iii). Thus, we have the following cases.

\textit{Case 1}: $S = M(1, r)$ i.e. $S = \{a, a^2, \ldots, a^r\}$. Since $S$ is a cyclic group, for any two distinct $x, y \in S$, note that $\langle x, y \rangle$ is a cyclic subgroup of $S$.
Consequently, $\langle x, y \rangle$ is a monogenic subsemigroup of $S$ (cf. Lemma \ref{cyclic implies monogenic}) so that $\langle x, y \rangle  = \langle z \rangle $ for some $z \in S$. Thus,  $x \sim y$ in $\Gamma (S)$.
Hence, $\Gamma(S)$ is complete.

\textit{Case 2}:  $S = M(2, r)$ i.e. $S = \{a, a^2, \ldots, a^{r+1}\}$ with $a^{2+r} = a^2$. Clearly, $K_a = \{a^2, a^3, \ldots, a^{r+1}\}$. For $2 \leq i \leq r+1$,
we have $a^i \in \langle a \rangle$ so that $\langle a^i, a \rangle = \langle a \rangle$. Thus $a \sim a^i$ in $\Gamma(S)$. Since $K_a$ is a cyclic subgroup of $S$ and for any $a^i, a^j \in K_a$,
the subsemigroup $\langle a^i, a^j \rangle$ is monogenic in $S$ so that $a^i \sim a^j$ in $\Gamma(S)$. Thus, $\Gamma(S)$ is complete.

\textit{Case 3}: $S =M(3, r)$ such that $r$ is odd. Clearly,  $S = \{a, a^2, a^3, \ldots, a^{2+r} \}$ with $a^{3+r} = a^3 $.
By the similar argument used in \textit{Case 2}, note that for $ 2 \leq i \leq 2+r$, we have $a \sim a^i$  in $\Gamma(S)$. Since $3 + r$ is even implies $3 + r = 2k$ for some $k \in \mathbb{N}$.
Thus $a^3 = a^{3+r} = a^{2k} = (a^2)^k$ so that $a^3 \in \langle a^2 \rangle$. Consequently, $a^5, a^7, \cdots, a^{2+r} \in \langle a^2 \rangle$.
For $i > 2$, note that $\langle a^2, a^i \rangle = \langle a^2 \rangle $ and it gives $a^2 \sim a^i$ in $\Gamma(S)$. Now, $K_a = \{a^3, a^4, \ldots, a^{2+r} \}$ is a cyclic subgroup of $S$ and for any $a^i, a^j \in K_a$ note that $\langle a^i, a^j \rangle$ is a monogenic subsemigroup of $S$. Thus $a^i \sim a^j$ in $\Gamma(S)$. Hence, $\Gamma(S)$ is complete.
\end{proof}

\begin{corollary}\label{Gamma(G)-complete}
The cyclic graph $\Gamma(G)$ is complete if and only if $G$ is a finite cyclic group.
\end{corollary}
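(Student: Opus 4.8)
The plan is to derive this corollary directly from Theorem \ref{gamma-complete}, using the elementary fact recorded in the preliminaries that every element of a finite group has index $1$. The crux is the observation that among the three monogenic semigroups listed in (i)--(iii), only the first can be a group.

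For the backward direction I would proceed as follows. Suppose $G$ is a finite cyclic group, say $G = \langle a \rangle$ with $|G| = n$. Since the index of $a$ is $1$, the subsemigroup $\langle a \rangle$ coincides with $G$ and equals $M(1, n)$, which is exactly the form (i) with $r = n$. Hence $\Gamma(G)$ is complete by Theorem \ref{gamma-complete}.

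For the forward direction, suppose $\Gamma(G)$ is complete. By Theorem \ref{gamma-complete}, $G$ must be one of $M(1, r)$, $M(2, r)$, or $M(3, r)$ (with $r$ odd in the last case). I would then rule out the latter two by showing they are not groups: in $M(m, r)$ the distinguished generator $a$ has index exactly $m$, so for $m \ge 2$ the element $a$ has index $\ge 2$, contradicting the fact that every element of a finite group has index $1$. Equivalently, the tail elements $a, a^2, \ldots, a^{m-1}$ lie outside the unique maximal subgroup $K_a$, so $M(m, r)$ possesses no identity when $m \ge 2$ and is therefore not a group. Consequently $G = M(1, r)$, which is the cyclic group $C_r$.

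The argument is short and presents no genuine obstacle, since the heavy lifting is already done in Theorem \ref{gamma-complete}. The only point requiring care is the verification that the semigroups in (ii) and (iii) indeed fail to be groups, and this follows at once from the index-$1$ characterization of elements of a finite group.
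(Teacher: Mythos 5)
Your proof is correct and takes exactly the route the paper intends: the corollary is stated without proof as an immediate consequence of Theorem \ref{gamma-complete} together with the fact, recorded in the preliminaries, that every element of a finite group has index $1$, so among the forms (i)--(iii) only $M(1,r)$ can be a group. Your explicit check that $M(m,r)$ with $m \ge 2$ fails to be a group is the one detail the paper leaves implicit, and you handle it correctly via the index-$1$ characterization.
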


\begin{theorem}\label{Pow-complete}
For a semigroup $S$, the following are equivalent:
\begin{enumerate}
\item[(i)] The power graph {\rm Pow$(S)$} is complete.
\item[(ii)] For some prime $p$ and $n \in \mathbb{N}_0$, we have $S = M(m, p^n)$ with either $m \in \{1, 2\}$ or $m = 3$ such that $3 + p^n$ is even.
\item[(iii)] The cyclic subsemigroups of $S$ are linearly ordered with respect to the usual containment relation (i.e., for any two cyclic subsemigroups $S_1, S_2$ of $S$, $S_1 \subseteq S_2$ or $S_2 \subseteq S_1$).
\end{enumerate}
\end{theorem}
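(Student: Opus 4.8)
The plan is to establish all three equivalences by first disposing of (i) $\Leftrightarrow$ (iii), which requires no structure theory, and then proving (i) $\Leftrightarrow$ (ii) through a direct analysis of the monogenic semigroup $M(m,r)$.

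For (i) $\Leftrightarrow$ (iii), I would observe that for two distinct elements $x,y \in S$ the power-graph adjacency $x \sim y$ means precisely that $x = y^k$ or $y = x^k$ for some $k \in \mathbb{N}$, i.e. $x \in \langle y \rangle$ or $y \in \langle x \rangle$. Since $\langle x \rangle$ is the smallest subsemigroup containing $x$, the condition $x \in \langle y \rangle$ is equivalent to $\langle x \rangle \subseteq \langle y \rangle$. Hence $x \sim y$ in ${\rm Pow}(S)$ if and only if $\langle x \rangle \subseteq \langle y \rangle$ or $\langle y \rangle \subseteq \langle x \rangle$. Running this over all pairs $x,y$ shows that ${\rm Pow}(S)$ is complete exactly when any two cyclic subsemigroups of $S$ are comparable, which is statement (iii).

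For (i) $\Rightarrow$ (ii), I would first use Lemma \ref{Spanning} to note that completeness of ${\rm Pow}(S)$ forces completeness of $P_e(S)$, whence $S$ is monogenic by Theorem \ref{enhanced-complete}; write $S = M(m,r)$ with $K_a$ its cyclic subgroup of order $r$. I would rule out $m \geq 4$ by checking directly from the normal form (using $a^s = a^t \Leftrightarrow s=t$ when $\min(s,t) < m$, and $a^s = a^t \Leftrightarrow s \equiv t \pmod r$ when $s,t \geq m$) that neither of $a^2, a^3$ is a power of the other, so $a^2 \not\sim a^3$. For $m \leq 3$, I would restrict ${\rm Pow}(S)$ to $K_a$: since $K_a$ is a finite group, every group power is already a positive semigroup power, so the restriction coincides with the power graph of the cyclic group $K_a$, and Theorem \ref{power graph of G - M K Sen} forces $r = p^n$. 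Finally, for $m = 3$, adjacency of $a^2$ and $a^3$ requires $a^3 \in \langle a^2 \rangle$, i.e. solvability of $2k \equiv 3 \pmod r$; this fails when $r$ is even, so $r$ must be odd, equivalently $3 + p^n$ is even.

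For (ii) $\Rightarrow$ (i), I would argue that every element is a positive power of $a$, so $a$ is adjacent to all vertices, and that the restriction of ${\rm Pow}(S)$ to $K_a$ is complete by Theorem \ref{power graph of G - M K Sen} since $|K_a| = p^n$. For $m \in \{1,2\}$ the tail contributes only the universal vertex $a$, finishing the argument. For $m = 3$ the remaining tail vertex $a^2$ must also be shown universal: because $r = p^n$ is odd, $2$ is invertible modulo $r$, so $2k \equiv i \pmod r$ is solvable with $2k \geq m$ for every $i \geq 3$, giving $a^i \in \langle a^2 \rangle$ and hence $a^2 \sim a^i$; together with $a^2 \sim a$ this yields completeness. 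The main obstacle throughout is the careful exponent bookkeeping in $M(m,r)$ — in particular verifying that the restriction of ${\rm Pow}(S)$ to $K_a$ really is the group power graph, so that Theorem \ref{power graph of G - M K Sen} is applicable — together with the invertibility-of-$2$ argument modulo $r$, which is exactly what distinguishes the admissible odd periods from the even ones in the case $m=3$.
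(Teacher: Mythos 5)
Your proof is correct, and it departs from the paper's in two genuine ways, though the overall skeleton is shared: both arguments use Lemma \ref{Spanning} to push completeness of ${\rm Pow}(S)$ upward, reduce to $S = M(m,r)$, apply Theorem \ref{power graph of G - M K Sen} to the cyclic subgroup $K_a$ to force $r = p^n$, and use invertibility of $2$ modulo an odd $r$ for the case $m=3$. The first difference is in how the constraints on the index $m$ are obtained: the paper passes through the cyclic graph and quotes Theorem \ref{gamma-complete} wholesale (which already contains the case analysis $m \in \{1,2\}$, or $m=3$ with $3+r$ even), whereas you extract only monogenicity from Theorem \ref{enhanced-complete} and then re-derive the constraints by direct exponent bookkeeping in the power graph itself ($a^2 \not\sim a^3$ when $m \ge 4$, since neither exponent can reach the periodic part; $2k \equiv 3 \pmod{r}$ unsolvable when $r$ is even). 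This makes your argument independent of the cyclic-graph theorem and in fact somewhat simpler, since power-graph adjacency is easier to test than monogenicity of $\langle a^2, a^3 \rangle$. The second difference concerns (i) $\Leftrightarrow$ (iii): the paper merely cites Proposition 2.11 of Chakrabarty--Ghosh--Sen, while you give a short self-contained proof from the observation that, for distinct $x,y$, adjacency in ${\rm Pow}(S)$ is exactly comparability of $\langle x \rangle$ and $\langle y \rangle$. The paper's route is shorter given results it has already established; yours is more self-contained and keeps the entire theorem internal to power-graph arithmetic. One point you rightly flag, and which deserves to stay explicit in a final write-up, is that the induced subgraph of ${\rm Pow}(S)$ on $K_a$ coincides with the group power graph ${\rm Pow}(K_a)$ (in a finite group, inverses and the identity are positive powers); this identification is precisely what licenses the appeal to Theorem \ref{power graph of G - M K Sen}.
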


\begin{proof}
(i)$ \Longleftrightarrow$ (ii). Suppose ${\rm Pow}(S)$ is complete. Then by Lemma \ref{Spanning}, $\Gamma(S)$ is complete. Consequently, $S = M(m, r)$, where either $m \in \{1,2\}$ or $m = 3$ with $3 + r$ is even (cf. Theorem \ref{gamma-complete}). Clearly, $|K_a| = r$ and $K_a$ is a cyclic subgroup of $S$. Note that ${\rm Pow}(K_a)$ is complete because ${\rm Pow}(S)$ is complete. Thus, by Theorem \ref{power graph of G - M K Sen}, we have $r = p^n$ for some prime $p$ and $n \in \mathbb{N}_0$.

Conversely, for some prime $p$ and $n \in \mathbb{N}_0$, suppose that $S = M(m, p^n)$ with either $m \in \{1, 2\}$ or $m = 3$ such that $3 + p^n$ is even. If $m = 1$, then $S$ is a cyclic group of order $p^n$ for some prime $p$. By Theorem \ref{power graph of G - M K Sen}, {\rm Pow$(S)$} is complete. If $m = 2$, then $K_a = \{ a^2, a^3, \ldots, a^{r+1}\}$ with $a^{2 + r} = a^2$ is a cyclic subgroup of order $r = p^n$ for some prime $p$. Consequently, ${\rm Pow}(K_a)$ is a complete graph (cf. Theorem \ref{power graph of G - M K Sen}). Also, for $2 \le i \le r + 1$, we have $a \sim a^i$ in ${\rm Pow}(S)$. Thus, Pow$(S)$ is complete. If $m = 3$ such that $3+p^n$ is even, then $K_a = \{a^3, a^4, \ldots, a^{2+p^n}\}$ with $a^{3 + p^n} = a^3$. Clearly,  $|K_a| = p^n$. By Theorem \ref{power graph of G - M K Sen}, ${\rm Pow}(K_a)$ is complete. Also, for $i \ge 2$ note that $a \sim a^i$ in Pow$(S)$. Since $3+p^n$  is even so that $a^3 \in  \langle a^2 \rangle$. Consequently, we have $\langle a^2 \rangle = S \setminus \{a\}$. Thus, for all $i > 2$ we have $a^2 \sim a^i$ in ${\rm Pow}(S)$. Hence, Pow$(S)$ is complete.

 (i)$ \Longleftrightarrow$ (iii) holds by \cite[Proposition 2.11]{a.MKsen2009}
\end{proof}

The definition of $P_c(S)$ gives us the following straightforward lemma.

\begin{lemma}\label{Commutative-complete}
The commuting graph $P_c(S)$ of a semigroup $S$ is complete if and only if $S$ is commutative.
\end{lemma}

\section{Equality of graphs}
In this paper, we consider various graphs, viz. power graph, cyclic graph, enhanced power graph and commuting graph, on a finite semigroup. In view of Lemma \ref{Spanning}, it would be interesting to investigate the following question.

\vspace{.2cm}
\textbf{Question:} For which (finite) semigroups the graphs in an arbitrary pair of these graphs are equal ?
\vspace{.2cm}

In this section, we answer the above question. In case of finite groups, this question was investigated in \cite{a.cameron2017}. We begin with an example of a semigroup whose cyclic graph and enhanced power graphs are not equal.

\begin{example}
Let $S = M(3,2) = \{a, a^2, a^3, a^4\}$, where $a^5 = a^3$. Note that $P_e(S)$ is complete (cf. Theorem \ref{enhanced-complete}) but $\Gamma(S)$ is not complete (cf. Theorem \ref{gamma-complete}). Then $P_e(S) \ne \Gamma(S)$. See Figure $1$.
\begin{figure}[h!]
	\centering
	\includegraphics[width=0.8\textwidth]{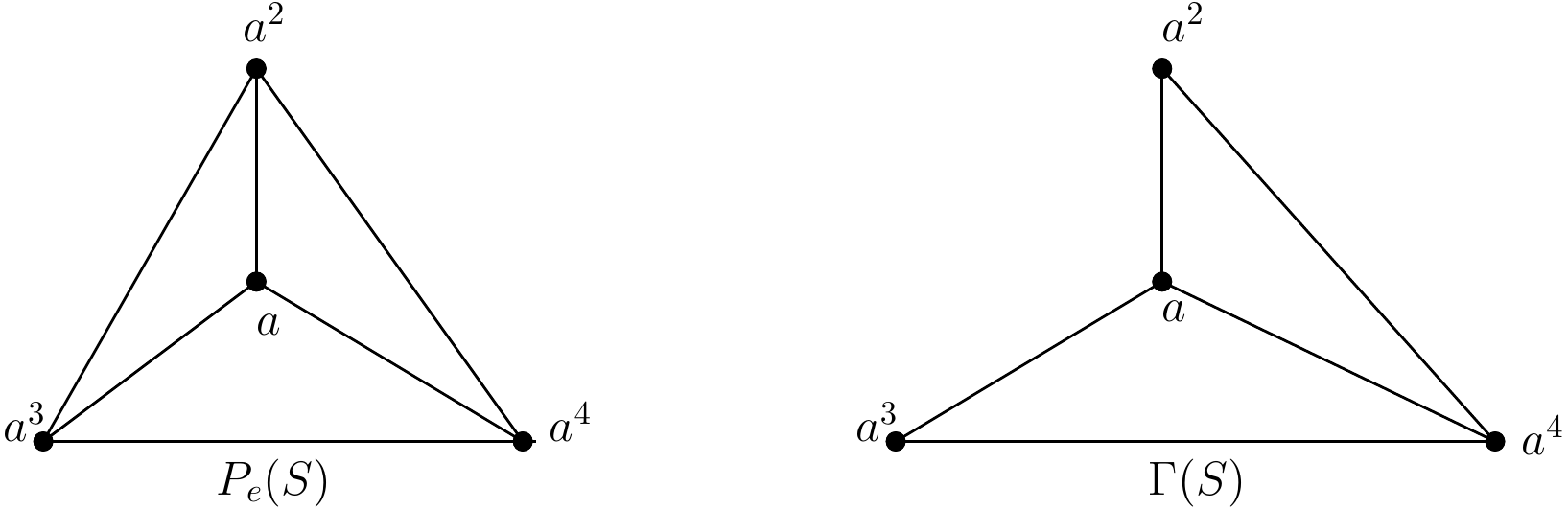}
	\caption{}
\end{figure}
\end{example}

\begin{theorem}\label{Gamma-equal-enhanced}
The enhanced power graph $P_e(S)$ is equal to $\Gamma(S)$ if and only if for each $a \in S$, we have one of the following form:
\begin{enumerate}
\item[(i)]$\langle a \rangle = \langle a  : a^{1 + r} = a \rangle$.
\item[(ii)]$\langle a \rangle = \langle a  : a^{2+r} = a^2 \rangle$.
\item[(iii)]$\langle a \rangle = \langle a  : a^{3+r} = a^3 \rangle$ where $r$ is odd.
\end{enumerate}
\end{theorem}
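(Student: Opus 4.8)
The plan is to exploit the spanning containment $\Gamma(S) \preceq P_e(S)$ established in Lemma \ref{Spanning}. Since $\Gamma(S)$ is always a spanning subgraph of $P_e(S)$, equality of the two graphs is equivalent to the reverse edge inclusion: every edge of $P_e(S)$ must be an edge of $\Gamma(S)$. Unwinding the definitions, $a \sim b$ in $P_e(S)$ means $a, b \in \langle z \rangle$ for some $z \in S$, while $a \sim b$ in $\Gamma(S)$ means $\langle a, b \rangle$ is monogenic. So I must show that the stated local conditions on each $\langle a \rangle$ are equivalent to the implication that $a, b \in \langle z \rangle$ for some $z$ forces $\langle a, b \rangle$ to be monogenic.

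The key reduction is a localization to a single monogenic subsemigroup. Suppose $a, b \in \langle z \rangle$; then $a, b$ are powers of $z$ and $\langle a, b \rangle \subseteq \langle z \rangle$. Moreover, if $\langle a, b \rangle = \langle c \rangle$ for some $c \in S$, then $c \in \langle c \rangle = \langle a, b \rangle \subseteq \langle z \rangle$, so $c$ is itself a power of $z$. Hence whether $a$ and $b$ are adjacent in $\Gamma(S)$ is decided entirely inside $\langle z \rangle$; equivalently, the adjacency induced on $\langle z \rangle$ coincides with that of the cyclic graph $\Gamma(\langle z \rangle)$ of the monogenic semigroup $\langle z \rangle$. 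Therefore the edge inclusion above holds for all pairs drawn from $\langle z \rangle$ if and only if $\Gamma(\langle z \rangle)$ is complete.

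With this reduction in hand, the theorem follows from Theorem \ref{gamma-complete} applied to each monogenic subsemigroup. For necessity, I would assume $P_e(S) = \Gamma(S)$ and fix $a \in S$: any two distinct powers of $a$ lie in $\langle a \rangle$, hence are adjacent in $P_e(S)$ and therefore in $\Gamma(S)$, so $\Gamma(\langle a \rangle)$ is complete, and Theorem \ref{gamma-complete} (with $S = \langle a \rangle$) forces $\langle a \rangle$ into one of the three listed forms. For sufficiency, I would assume every $\langle a \rangle$ has one of the three forms and take any edge $a \sim b$ of $P_e(S)$, so $a, b \in \langle z \rangle$ for some $z$; by hypothesis $\langle z \rangle$ is of form (i), (ii) or (iii), whence $\Gamma(\langle z \rangle)$ is complete by Theorem \ref{gamma-complete}, so $\langle a, b \rangle$ is monogenic and $a \sim b$ in $\Gamma(S)$. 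Combined with $\Gamma(S) \preceq P_e(S)$ this yields $P_e(S) = \Gamma(S)$.

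The main obstacle is precisely the localization step: one must verify that adjacency in $\Gamma(S)$ of two elements sharing a common cyclic ambient subsemigroup $\langle z \rangle$ can be tested inside $\langle z \rangle$, that is, that a monogenic generator of $\langle a, b \rangle$ necessarily lies in $\langle z \rangle$. Once this is clear the problem decouples over the monogenic subsemigroups and Theorem \ref{gamma-complete} does the rest. A minor point worth stating carefully is that the index and period of $a$ are the same whether computed in $S$ or in $\langle a \rangle$, so that the forms (i)--(iii) are genuinely intrinsic to $\langle a \rangle$ and Theorem \ref{gamma-complete} may legitimately be invoked for the abstract monogenic semigroup $\langle a \rangle$.
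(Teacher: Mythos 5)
Your proposal is correct and takes essentially the same route as the paper: both directions of the equivalence are reduced, via the spanning containment of Lemma \ref{Spanning}, to the completeness criterion of Theorem \ref{gamma-complete} applied to the monogenic subsemigroups $\langle a \rangle$ and $\langle z \rangle$. The only refinement is that you state and justify explicitly the localization fact (any monogenic generator $c$ of $\langle a,b\rangle$ with $a,b\in\langle z\rangle$ satisfies $c\in\langle c\rangle=\langle a,b\rangle\subseteq\langle z\rangle$), which the paper uses only implicitly when it transfers the non-adjacency of $a^2$ and $a^3$ inside $\langle a\rangle$ to non-adjacency in $\Gamma(S)$.
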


\begin{proof}
First suppose that $P_e(S) = \Gamma(S)$. On contrary, suppose that there exists $a \in S$ such that $\langle a \rangle $ is not of the form given in (i), (ii) and (iii). Then either $\langle a \rangle = M(3,r)$ with $r$ is even or $\langle a \rangle = M(m,r)$, where  $m \geq 4$. If $\langle a \rangle = M(3,r)$ with $r$ is even, then by the similar argument used in the proof of Theorem \ref{gamma-complete}, note that $a^2$ and $a^3$ are not adjacent in $\Gamma(S)$. Since $a^2, a^3 \in \langle a \rangle$, we have $a^2 \sim a^3$ in $P_e(S)$. Thus $P_e(S) \neq \Gamma(S)$; a contradiction. If $\langle a \rangle = M(m, r)$ with $m \ge 4$, then again by the proof of Theorem \ref{gamma-complete}, note that $a^2$ is not adjacent to $a^3$ in $\Gamma(S)$. Clearly,  $a^2 \sim a^3$ in $P_e(S)$. Consequently, $P_e(S) \neq \Gamma(S)$; again a contradiction. Hence, for each $a \in S$, $\langle a \rangle$ must be one of the form given in (i), (ii) and (iii).

Conversely, suppose that for each $a \in S$, $\langle a \rangle$ is one of the form given in (i), (ii) and (iii). Since $\Gamma(S)$ is a (spanning) subgraph of $P_e(S)$ (cf. Lemma \ref{Spanning}), it is sufficient to show that for any $x, y \in S$ such that $x \sim y$ in $P_e(S)$, we have $x \sim y$ in $\Gamma(S)$. Let $x \sim y$ in $P_e(S)$. Then there exists $z \in S$ such that $x, y \in \langle z \rangle$. By the hypothesis, $\langle z \rangle$ is one of the form given in (i), (ii) and (iii). By Theorem \ref{gamma-complete}, $\Gamma(\langle z \rangle)$ is complete. Consequently, $\langle x, y \rangle$ is a monogenic subsemigroup of $S$. Hence, $x \sim y$ in $\Gamma(S)$.
\end{proof}

\begin{corollary}\label{Gamma(G)-equal-P_e(G)}
For a finite group $G$, we have $\Gamma(G) = P_e(G)$.
\end{corollary}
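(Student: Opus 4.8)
The plan is to derive Corollary~\ref{Gamma(G)-equal-P_e(G)} as an immediate special case of Theorem~\ref{Gamma-equal-enhanced}. The theorem says that $\Gamma(S) = P_e(S)$ precisely when every cyclic subsemigroup $\langle a \rangle$ of $S$ has one of the three prescribed forms, namely $M(1,r)$, $M(2,r)$, or $M(3,r)$ with $r$ odd. So the entire task reduces to checking that, when $S = G$ is a finite group, every $\langle a \rangle$ automatically falls into the first of these three classes.

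The key observation, which I would place first, is the fact already recorded in the Preliminaries: the index of every element in a finite group is $1$. Concretely, for $a \in G$ the subsemigroup $\langle a \rangle$ is just the cyclic subgroup generated by $a$, and if $o(a) = r$ then $a^{r+1} = a$, so $\langle a \rangle = \langle a : a^{1+r} = a \rangle = M(1,r)$. Thus every cyclic subsemigroup of $G$ is of form (i). This is the whole substance of the argument; no case distinction on $r$ is needed, and forms (ii) and (iii) simply never arise in the group setting.

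Having established that each $\langle a \rangle$ is of form (i), I would invoke Theorem~\ref{Gamma-equal-enhanced} directly: its hypothesis is met for every $a \in G$, and hence $\Gamma(G) = P_e(G)$. The proof is therefore just two short steps — recall that indices are $1$ in a group so every cyclic subsemigroup is $M(1,r)$, then apply the theorem.

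There is no genuine obstacle here; the only thing to be careful about is the translation between the semigroup language $M(1,r)$ and the familiar group language, i.e. making explicit that "index $1$" is exactly condition (i). If one wanted an even more self-contained statement one could note that a finite group is the union of its cyclic subgroups, each of which is monogenic by Lemma~\ref{cyclic implies monogenic}, so that adjacency in $P_e(G)$ via some $\langle z \rangle$ coincides with $\langle x, y \rangle$ being monogenic; but routing everything through Theorem~\ref{Gamma-equal-enhanced} is the cleanest path and is the one I would take.
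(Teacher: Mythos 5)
Your proof is correct and is exactly the paper's intended argument: the corollary appears immediately after Theorem~\ref{Gamma-equal-enhanced} with no separate proof, since in a finite group every element has index $1$, so each $\langle a \rangle = M(1,r)$ falls under form (i) of the theorem. Nothing is missing.
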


\begin{example}
Let $S = M(2, 6) = \{a, a^2, a^3, a^4, a^5, a^6, a^7 \}$ where $a^8 = a^2$. By Theorem \ref{gamma-complete}, $\Gamma(S)$ is complete. Further, note that neither $a^2 \in \langle a^3 \rangle$
nor $a^3 \in \langle a^2 \rangle$. Thus, $a^2$ and $a^3$ is not adjacent in ${\rm Pow}(S)$. Hence, $\Gamma(S) \ne {\rm Pow}(S)$. See Figure $2$.

\begin{figure}[h!]
		\centering
		\includegraphics[width=0.8\textwidth]{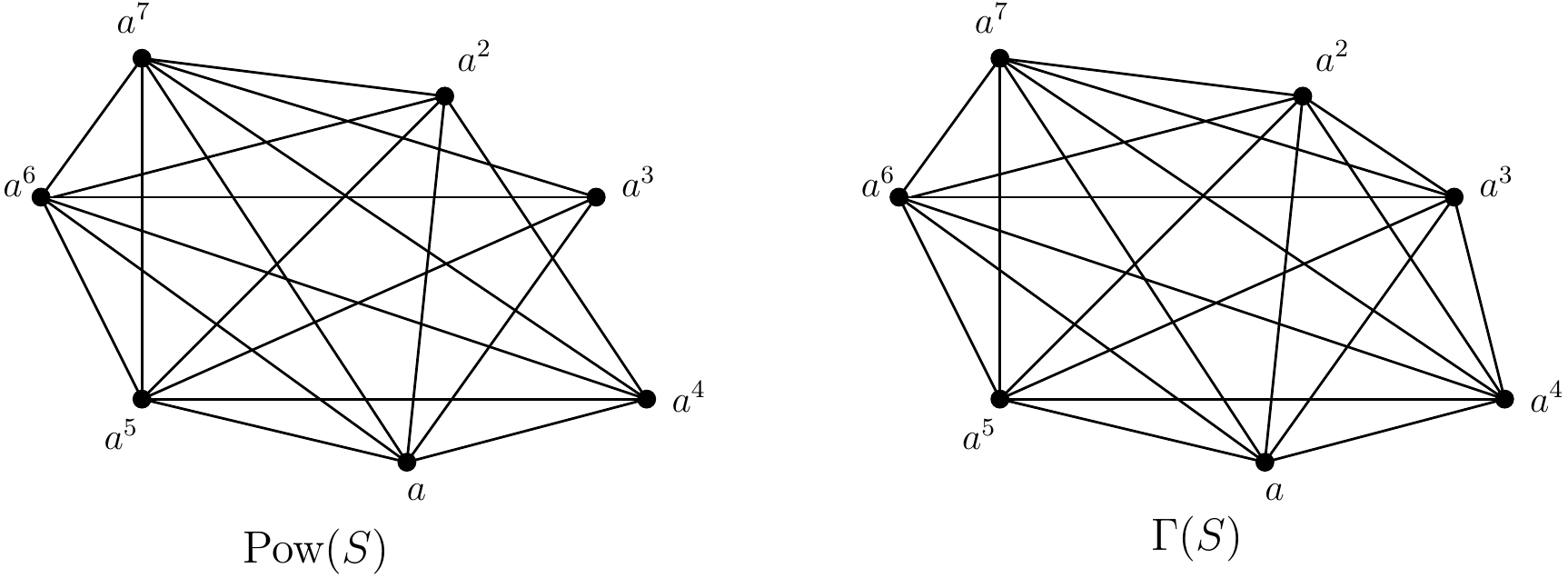}
		\caption{}
	\end{figure}
\end{example}

\begin{lemma}\label{gamma S-Pow S}
Let $S  = M(m, r)$ be a  monogenic semigroup. If $i < m$ and $a^i$ is not adjacent to $a^j$ in ${\rm Pow}(S)$, then $a^i$ is not adjacent to $a^j$ in $\Gamma (S)$.
\end{lemma}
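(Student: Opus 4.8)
The plan is to prove the contrapositive: assuming $a^i$ and $a^j$ are adjacent in $\Gamma(S)$ (with $i < m$), I will show they are adjacent in ${\rm Pow}(S)$, i.e. that one of them is a power of the other. Write $S = M(m,r) = \langle a : a^m = a^{m+r}\rangle$, so that the powers $a, a^2, \ldots, a^{m+r-1}$ are all distinct and every $a^s$ with $s \ge m$ lies in $K_a = \{a^m, \ldots, a^{m+r-1}\}$. Adjacency of $a^i$ and $a^j$ in $\Gamma(S)$ means $\langle a^i, a^j \rangle = \langle a^t \rangle$ for some $t \in \mathbb{N}$; the strategy is to pin down $t$ and read off the conclusion.

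First I would exploit the hypothesis $i < m$ to convert membership statements into exact integer equations. Since $a^i \in \langle a^t \rangle$, we have $a^i = a^{tk}$ for some $k \ge 1$; because $i < m$, the element $a^i$ lies in the injective ``tail'' of $\langle a \rangle$, so the exponent is forced and $i = tk$. In particular $t \le i < m$, so $a^t$ also lies in the tail. Next, from $a^t \in \langle a^t \rangle = \langle a^i, a^j \rangle$ I can write $a^t = a^{pi + qj}$ for some $p, q \ge 0$ with $p + q \ge 1$; since $t < m$, the exponent again cannot wrap into $K_a$, and therefore $pi + qj = t$ as integers.

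The remaining step is a short case analysis on $(p,q)$ based on $pi + qj = t \le i$. If $p \ge 1$, then $i \le pi \le pi + qj = t \le i$ forces $p = 1$, $q = 0$ and $t = i$; hence $\langle a^i, a^j \rangle = \langle a^i \rangle$, so $a^j \in \langle a^i \rangle$ is a power of $a^i$. If $p = 0$, then $q \ge 1$ and $qj = t \le i < m$, so $j < m$; reading $a^j \in \langle a^t \rangle$ as an exact exponent equation gives $j = tl$ for some $l \ge 1$, and combining with $i = tk$ yields $i = jk$, so that $a^i = (a^j)^{k}$ is a power of $a^j$. In either case $a^i$ and $a^j$ are adjacent in ${\rm Pow}(S)$, as required.

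The main obstacle, and the only place where genuine care is needed, is justifying that the exponent identities $i = tk$ and $pi + qj = t$ hold exactly, rather than merely up to the cyclic behaviour of $K_a$. This is precisely where $i < m$ is indispensable: it keeps $a^i$ and $a^t$ in the region where $s \mapsto a^s$ is injective, ruling out the ``folding'' of large exponents into $K_a$ that would otherwise invalidate the argument (and that does break the analogous statement once $i < m$ is dropped, as the semigroups $M(2,6)$ and $M(3,r)$ illustrate).
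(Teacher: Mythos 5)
Your proof is correct, and it rests on the same key fact as the paper's argument: for exponents below the index $m$, the map $s \mapsto a^s$ is injective, so membership statements can be read as exact integer equations. The organization, however, is genuinely different. The paper argues by contradiction: assuming $a^i \sim a^j$ in $\Gamma(S)$ but not in ${\rm Pow}(S)$, it first notes that the generator exponent $k$ in $\langle a^i,a^j\rangle = \langle a^k\rangle$ must satisfy $k \ne i,j$, and then splits into four cases according to the relative order of $i,j,k$; in each case a single membership relation forces $m \le i$, $m \le k < i$, or $m \le j < i$, contradicting $i < m$, so the paper never identifies what $k$ is. You instead take the contrapositive and \emph{solve} for the generator: from the exact equations $i = tk$ and $t = pi + qj$ you conclude either $p \ge 1$, which pins $t = i$ and gives $a^j \in \langle a^i \rangle$, or $p = 0$, which makes $i$ a multiple of $j$ and gives $a^i \in \langle a^j \rangle$. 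This buys a shorter case analysis (two cases instead of four), removes the need for the preliminary observation $k \ne i,j$, and is more informative, since it exhibits which element is a power of which. One step you elide: in the case $p = 0$, the equation $i = jk$ as stated requires $t = j$, which you should justify (it follows from $t = qj \ge j$ together with $j = tl \ge t$); alternatively, you can skip $j = tl$ entirely, since $i = tk = (qj)k$ already displays $i$ as a multiple of $j$. Either patch is one line, so this is a cosmetic imprecision rather than a gap.
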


\begin{proof}
If possible, let $a^i \sim a^j$ in $\Gamma (S)$. Then $\langle a^i, a^j \rangle = \langle a^k \rangle$ for some $k \in \mathbb N$. Note that  $k \neq i, j$. Otherwise, we have $a^i \sim a^j$ in Pow$(S)$; a contradiction.
Now we have the following cases on $i, j, k$:

\textit{Case 1}: $i,j < k$. Since  $a^i \in \langle a^k \rangle$, we have  $a^i = a^{tk}$ for some $t \in \mathbb N$. It follows that $m \leq i$; a contradiction .

\textit{Case 2}: $k < i,j$. Since $a^k \in \langle a^i, a^j \rangle$, we get  $m \leq k < i$; a contradiction.

\textit{Case 3}: $i < k < j$. Since $a^i \in \langle a^k \rangle$, we have  $a^i = a^{tk}$ for some $t \in \mathbb N$. Consequently, $m \leq i$; again a contradiction.

\textit{Case 4}: $j < k < i$. Since $a^j \in \langle a^k \rangle$, we get $a^j = a^{tk}$ for some $t \in \mathbb N$. Consequently,  $m \leq j < k <i$; again a contradiction.
\end{proof}

\begin{theorem}\label{power-cyclic-eq}
For a semigroup $S$, the following are equivalent:
\begin{enumerate}
\item[(i)] The cyclic graph  $\Gamma (S)$ is equal to  ${\rm Pow(S)}$.
\item[(ii)] Every cyclic subgroup of $S$ has a prime power order.
\item[(iii)] For each $a \in S$, we have $\langle a \rangle = M(m, p^n)$ for some prime $p$ and $m, n \in \mathbb N_0$.
\end{enumerate}
\end{theorem}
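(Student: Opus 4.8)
The plan is to establish the theorem as a cycle of implications, proving (i)$\Longrightarrow$(iii)$\Longrightarrow$(ii)$\Longrightarrow$(i), since (ii) and (iii) are essentially local statements about individual cyclic subsemigroups while (i) is the global equality of the two graphs. The key observation is that both $\Gamma(S)$ and ${\rm Pow}(S)$ restrict naturally to each monogenic subsemigroup $\langle a\rangle$, and since $\Gamma(S)$ and ${\rm Pow}(S)$ agree on $S$ if and only if they agree on every such $\langle a\rangle$ (adjacency of $x,y$ in either graph forces $x,y$ to lie in a common cyclic subsemigroup), the problem reduces to a question about a single $M(m,r)$.

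For (i)$\Longrightarrow$(iii), I would argue the contrapositive: suppose some $\langle a\rangle = M(m,r)$ has $r$ not a prime power. Then $K_a$ is a cyclic group of order $r$ which, by Theorem \ref{power graph of G - M K Sen}, has ${\rm Pow}(K_a)$ not complete, so there exist $a^i, a^j \in K_a$ nonadjacent in ${\rm Pow}(S)$. However $K_a$ is a cyclic subgroup, hence a monogenic subsemigroup (cf. Lemma \ref{cyclic implies monogenic}), so $\langle a^i, a^j\rangle$ is monogenic and thus $a^i \sim a^j$ in $\Gamma(S)$. This exhibits an edge of $\Gamma(S)$ that is not an edge of ${\rm Pow}(S)$, contradicting (i). Here I must be slightly careful to phrase (iii) correctly: the period $r$ being a prime power is the substantive content, and the index $m$ is unconstrained, which matches the form $M(m,p^n)$ in the statement.

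The implication (iii)$\Longrightarrow$(ii) is immediate, since every cyclic subgroup of $S$ is of the form $K_a$ for some $a$ (every cyclic subgroup is monogenic by Lemma \ref{cyclic implies monogenic}, hence equals $\langle b\rangle$ for some $b$ and coincides with its own $K_b$), and its order is exactly the period $p^n$.

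The main work, and the step I expect to be the central obstacle, is (ii)$\Longrightarrow$(i). Since ${\rm Pow}(S) \preceq \Gamma(S)$ always holds (cf. Lemma \ref{Spanning}), it suffices to show every edge of $\Gamma(S)$ is an edge of ${\rm Pow}(S)$. So let $x \sim y$ in $\Gamma(S)$, meaning $\langle x,y\rangle = \langle z\rangle$ is monogenic, say $\langle z\rangle = M(m,r)$ with $x = z^i$, $y = z^j$. The group $K_z$ is a cyclic subgroup of $S$, so by hypothesis (ii) its order $r$ is a prime power $p^n$. I would then split into cases according to whether the exponents $i,j$ fall below the index $m$ or into the cyclic part $K_z$. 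If both $z^i, z^j$ lie in $K_z$, then since ${\rm Pow}(K_z)$ is complete by Theorem \ref{power graph of G - M K Sen} (as $|K_z| = p^n$), we get $x \sim y$ in ${\rm Pow}(S)$ directly. The delicate case is when one exponent, say $i$, satisfies $i < m$; here Lemma \ref{gamma S-Pow S} is precisely the tool needed, since it guarantees that if $z^i$ and $z^j$ were nonadjacent in ${\rm Pow}(\langle z\rangle)$ they would also be nonadjacent in $\Gamma(\langle z\rangle)$, contradicting $x\sim y$. Thus nonadjacency in the power graph is impossible, forcing $x \sim y$ in ${\rm Pow}(S)$. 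The subtlety to handle with care is making sure the contrapositive direction of Lemma \ref{gamma S-Pow S} is applied within the correct ambient monogenic subsemigroup $\langle z\rangle$ and that the prime-power conclusion about $K_z$ is invoked exactly where the two cases meet.
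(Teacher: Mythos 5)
Your proof is correct and takes essentially the same approach as the paper: the same key ingredients appear in the same roles (Theorem~\ref{power graph of G - M K Sen} applied to the cyclic subgroup $K_a$, the contrapositive of Lemma~\ref{gamma S-Pow S} to handle an exponent below the index, Lemma~\ref{cyclic implies monogenic}, and Lemma~\ref{Spanning}), the only organizational difference being that the paper proves (i)$\Leftrightarrow$(iii) and (ii)$\Leftrightarrow$(iii) directly rather than your cycle (i)$\Rightarrow$(iii)$\Rightarrow$(ii)$\Rightarrow$(i). One small wording slip worth fixing: in your (i)$\Rightarrow$(iii), the subsemigroup $\langle a^i,a^j\rangle$ is monogenic because it is a subgroup of the finite cyclic group $K_a$ and hence itself cyclic (then apply Lemma~\ref{cyclic implies monogenic}), not merely because $K_a$ itself is monogenic.
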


\begin{proof}
$({\rm i}) \Longleftrightarrow ({\rm iii})$. First assume that for each $a \in S$, we have $\langle a \rangle = M(m, p^n)$  for some prime $p$ and $m, n \in \mathbb N_0$. In view of  Lemma \ref{Spanning}, it is sufficient to show that $\Gamma(S)$ is a subgraph of $\text{Pow}(S)$. Let $a \sim b$  in $\Gamma(S)$. Then  $\langle a, b \rangle  = \langle c \rangle$ for some $c \in S$. Thus  $\langle c \rangle  = M(m, p^n)$ for some prime $p$ and $m, n \in \mathbb N_0$.
By Theorem \ref{power graph of G - M K Sen}, ${\rm Pow}(K_c)$ is complete. If $a, b \in K_c$, then $a \sim b$ in ${ \rm Pow}(K_c)$ so that one of $a, b$ is power of other. Thus, $a \sim b$ in ${ \rm Pow}(S)$. Without loss of generality, assume that $a \notin K_c$. Since $a, b \in \langle c \rangle$, we have $a = c^i$ and $b = c^j$ such that $i < m$. By Lemma \ref{gamma S-Pow S}, we have $a \sim b$ in ${\rm Pow}(S)$.

Conversely, suppose that ${\rm Pow}(S)  = \Gamma(S)$. For $a \in S$, clearly $\langle a \rangle = M(m, r)$ for some $m, r \in \mathbb{N}$. Then it is routine to verify ${\rm Pow}(K_a) = \Gamma(K_a)$. Since $\Gamma(K_a)$ is complete (cf. Corollary \ref{Gamma(G)-complete}) so is ${\rm Pow}(K_a)$. By Theorem \ref{power graph of G - M K Sen}, we have  $|K_a| = p^n$  for some prime $p$ and $n \in \mathbb N_0$. Thus $ r =  p^n$  for some prime $p$ and $n \in \mathbb N_0$.

$({\rm ii}) \Longleftrightarrow ({\rm iii})$. Suppose every cyclic subgroup of $S$ has prime power order. For $a \in S$, we have $\langle a \rangle = M(m, r)$. Since $K_a$ is a cyclic subgroup of $S$ of order $r$, we have $r = p^n$ for  some prime $p$ and $n \in \mathbb N_0$. Thus, $\langle a \rangle = M(m, p^n)$ for some prime $p$ and $n \in \mathbb N_0$. Conversely, let $H$ be a cyclic subgroup of $S$ so that $H = \langle a \rangle$ for some $a \in S$. Clearly, $H = M(1, r)$. By the hypothesis, we have $r = p^n$ for some prime $p$ and $n \in \mathbb N_0$. Thus, the order of $H$ is a prime power.
\end{proof}

In view of the Corollary \ref{Gamma(G)-equal-P_e(G)}, we have the following corollary of the above theorem.

\begin{corollary}{\rm \cite[Theorem 28 ]{a.cameron2017}}
For a finite group $G$, ${\rm Pow}(G)$ is equal to $P_e(G)$ if and only if every cyclic subgroup of $G$ has prime power order.
\end{corollary}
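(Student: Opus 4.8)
The plan is to derive this directly from Theorem \ref{power-cyclic-eq} together with Corollary \ref{Gamma(G)-equal-P_e(G)}, exploiting the fact that a finite group is in particular a finite semigroup, so every result proved above for an arbitrary $S$ specializes to $G$. The only structural observation I need is that, among the three graphs ${\rm Pow}(G)$, $\Gamma(G)$ and $P_e(G)$, two of them already coincide for every finite group, which reduces the desired statement to one already handled.

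Concretely, I would first invoke Corollary \ref{Gamma(G)-equal-P_e(G)}, which gives $\Gamma(G) = P_e(G)$ for every finite group $G$. This allows me to replace $P_e(G)$ by $\Gamma(G)$ in the equality I wish to characterize: the condition ${\rm Pow}(G) = P_e(G)$ is equivalent to ${\rm Pow}(G) = \Gamma(G)$. I would then apply the equivalence $({\rm i}) \Longleftrightarrow ({\rm ii})$ of Theorem \ref{power-cyclic-eq} with $S = G$, which states that ${\rm Pow}(G) = \Gamma(G)$ holds if and only if every cyclic subgroup of $G$ has prime power order. Chaining these two steps yields the claim at once.

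There is no genuine obstacle here, since all the content resides in the two earlier results; the corollary is essentially a substitution. The single point deserving a moment's care is verifying that the hypotheses of Theorem \ref{power-cyclic-eq} and Corollary \ref{Gamma(G)-equal-P_e(G)} are met, namely that $G$ qualifies as a finite semigroup so that the notions of $\langle a \rangle$, $K_a$ and cyclic subgroup used there specialize correctly. As observed in the preliminaries, every element of a finite group has index $m = 1$, so that $\langle a \rangle = K_a$ is simply the cyclic subgroup generated by $a$; with this identification the cyclic subgroups of $G$ are exactly the subsemigroups $\langle a \rangle$, and the chain of equivalences goes through verbatim.
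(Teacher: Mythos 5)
Your proposal is correct and is essentially the paper's own argument: the paper derives this corollary exactly by combining Corollary \ref{Gamma(G)-equal-P_e(G)} (that $\Gamma(G) = P_e(G)$ for any finite group) with the equivalence (i) $\Longleftrightarrow$ (ii) of Theorem \ref{power-cyclic-eq}. Your extra remark that every group element has index $1$, so the semigroup-theoretic notions specialize correctly, matches the observation already made in the paper's preliminaries.
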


\begin{theorem}\label{enhanced-equal-power}
The enhanced power graph $P_e(S)$ is equal to  ${\rm Pow}(S)$ if and only if for each $a \in S$, we have either $\langle a \rangle = M(m, p^n)$ where $m \in \{1, 2\}$ or $\langle a \rangle  = M(3, p^n)$ such that $p$ is an odd prime.
\end{theorem}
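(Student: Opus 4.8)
The plan is to follow the same two-layer strategy that worked in Theorems \ref{Gamma-equal-enhanced} and \ref{power-cyclic-eq}: since $\mathrm{Pow}(S) \preceq P_e(S)$ by Lemma \ref{Spanning}, equality $P_e(S) = \mathrm{Pow}(S)$ holds if and only if every edge of $P_e(S)$ is already an edge of $\mathrm{Pow}(S)$. Because adjacency in $P_e(S)$ means $x, y \in \langle z \rangle$ for some $z$, the whole question localizes to individual monogenic subsemigroups $\langle z \rangle = M(m,r)$: the obstruction to equality will always be a pair $a^i, a^j$ lying in a common cyclic subsemigroup but with neither a power of the other. So I would reduce to a pointwise condition on each $\langle a \rangle$ and then determine exactly which forms $M(m,r)$ force the two graphs to coincide on that subsemigroup.

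For the forward direction, I would assume $P_e(S) = \mathrm{Pow}(S)$ and fix $a \in S$ with $\langle a \rangle = M(m,r)$. First I would handle the period: the subgroup $K_a$ is cyclic of order $r$, and since $\mathrm{Pow}(K_a) = P_e(K_a)$ must hold (every two elements of $K_a$ lie in the common cyclic subsemigroup $K_a$, so they are $P_e$-adjacent, hence must be $\mathrm{Pow}$-adjacent), Theorem \ref{power graph of G - M K Sen} forces $r = p^n$. Then I would rule out the forbidden index values. For $m \ge 3$ with $r$ odd, or more generally for $m \ge 4$, I expect to exhibit two distinct powers $a^i, a^j$ (the natural candidates are $a^2$ and $a^3$, mirroring the proof of Theorem \ref{gamma-complete}) that both lie in $\langle a \rangle$ but with neither equal to a power of the other, giving a $P_e$-edge that is not a $\mathrm{Pow}$-edge. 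The delicate case is $m = 3$: here I must show that $r = p^n$ with $p$ odd is exactly what is needed — when $p$ is odd, $3 + p^n$ is even and $a^3 = (a^2)^k \in \langle a^2 \rangle$, which should rescue the $a^2, a^3$ pair and more generally collapse everything into powers of $a^2$; when $p = 2$, this collapse fails and $a^2, a^3$ furnish a bad pair. This case analysis on parity of $p^n$ for $m=3$ is where I expect the real work to lie.

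For the converse, I would assume each $\langle a \rangle$ has the prescribed form and show every $P_e$-edge is a $\mathrm{Pow}$-edge. Take $x \sim y$ in $P_e(S)$, so $x, y \in \langle z \rangle$ for some $z$, where $\langle z \rangle = M(m, p^n)$ with $m \in \{1,2\}$, or $M(3, p^n)$ with $p$ odd. I claim $\mathrm{Pow}(\langle z \rangle)$ is complete in each of these cases, which immediately gives $x \sim y$ in $\mathrm{Pow}(S)$. This is precisely the content of Theorem \ref{Pow-complete}(ii): the conditions $m \in \{1,2\}$ with $r = p^n$, and $m = 3$ with $3 + p^n$ even (equivalently $p$ odd), are exactly the cases where $\mathrm{Pow}(M(m,r))$ is complete. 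So the converse should reduce cleanly to an application of Theorem \ref{Pow-complete}.

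The main obstacle is confirming that the list in the theorem is sharp at $m=3$, i.e. that oddness of $p$ is both necessary and sufficient there; this hinges on the parity computation showing $a^3 \in \langle a^2 \rangle$ iff $3 + p^n$ is even, and on checking no other bad pair survives once $a^3$ is absorbed into $\langle a^2 \rangle$. I would rely on Lemma \ref{gamma S-Pow S}-style index bookkeeping and Theorem \ref{Pow-complete} to discharge the routine containments, keeping the genuine argument focused on the index-$3$ parity dichotomy.
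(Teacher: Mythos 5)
Your proposal is correct, but it takes a genuinely different route from the paper's. The paper's proof is essentially three lines: since ${\rm Pow}(S) \preceq \Gamma(S) \preceq P_e(S)$ (Lemma~\ref{Spanning}), the equality ${\rm Pow}(S) = P_e(S)$ holds if and only if both ${\rm Pow}(S) = \Gamma(S)$ and $\Gamma(S) = P_e(S)$ hold, and these two equalities were already characterized in Theorems~\ref{power-cyclic-eq} and \ref{Gamma-equal-enhanced}; intersecting those two lists of local conditions on $\langle a \rangle$ gives exactly the statement ($m \in \{1,2\}$ with period $p^n$, or $m = 3$ with period $p^n$ odd). You never invoke $\Gamma(S)$ at all: you prove the equivalence directly, extracting the prime-power period from the fact that $K_a$ is a monogenic subsemigroup (so all of $K_a$ is a clique in $P_e(S)$, forcing ${\rm Pow}(K_a)$ to be complete, whence Theorem~\ref{power graph of G - M K Sen} applies), ruling out the bad indices by exhibiting the pair $a^2, a^3$ as a $P_e$-edge that is not a ${\rm Pow}$-edge, and reducing the converse to the completeness criterion, Theorem~\ref{Pow-complete}(ii), applied to each $\langle z \rangle$. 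Both arguments are sound. Yours is longer and partially duplicates computations hidden inside the cited equality theorems, but it is self-contained and makes explicit a clean reformulation the paper leaves implicit: $P_e(S) = {\rm Pow}(S)$ if and only if ${\rm Pow}(\langle a \rangle)$ is complete for every $a \in S$. The paper's sandwich argument buys brevity by reusing work already done. One slip you should fix: in the forward direction you describe the forbidden cases as ``$m \ge 3$ with $r$ odd, or more generally $m \ge 4$''; the forbidden cases are $m = 3$ with $r$ \emph{even}, or $m \ge 4$ with $r$ arbitrary. Your own next sentence (odd $p$ rescues the $a^2, a^3$ pair via $a^3 \in \langle a^2 \rangle$, while $p = 2$ kills it) has the parity the right way around, so this is only a typo, but as written it contradicts the rest of your argument.
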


\begin{proof}
Suppose that $\text{Pow}(S) = P_e(S)$. Since  ${\rm Pow}(S) \preceq \Gamma(S) \preceq P_e(S)$ (cf. Lemma \ref{Spanning}), we have $\Gamma(S) = P_e(S)$ and Pow$(S) = \Gamma(S)$. By Theorems \ref{Gamma-equal-enhanced} and \ref{power-cyclic-eq}, the result holds.
 \end{proof}

In general, the cyclic graph and the commuting graph of $S$ are not equal (see Example \ref{example cyclic not equal commuting}).  Now we present a necessary and sufficient condition on $S$ such that these two graphs are equal.

\begin{example}\label{example cyclic not equal commuting}
	Let $S = \mathbb Z_4 = \{\overline{0}, \overline{1}, \overline{2}, \overline{3}\}$ be a semigroup with respect to multiplication modulo $4$. Being a commutative semigroup, clearly $P_c(S)$ is complete but $\Gamma (S)$ is not complete as $\langle \; \overline{0}, \overline{1} \; \rangle$ is not a monogenic semigroup. Hence, $P_c(S) \ne \Gamma (S)$. See Figure $3$.
	\begin{figure}[h!]
		\centering
		\includegraphics[width=0.6\textwidth]{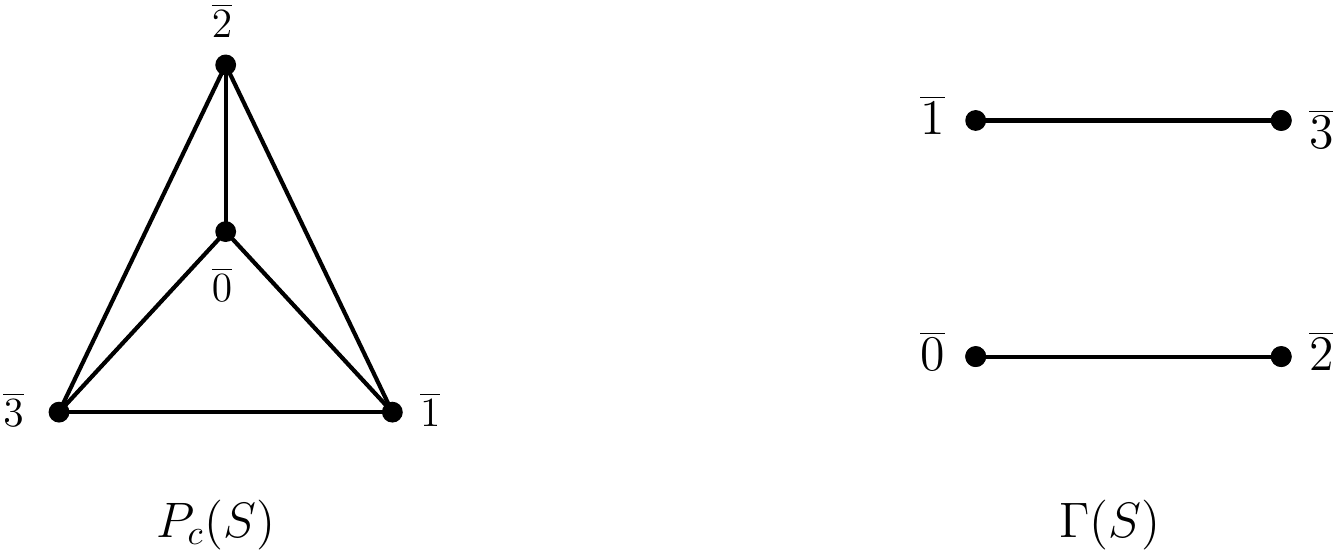}
		\caption{}
	\end{figure}
\end{example}

\begin{proposition}\label{cyclic graph-commuting graph-ff'=f'f}
If the cyclic graph $\Gamma(S)$ is equal to $P_c(S)$, then for $f, f' \in E(S)$ such that $ff' = f'f$, we have $f = f'$.
\end{proposition}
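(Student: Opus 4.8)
The plan is to argue by contradiction, the whole force of the statement coming from the fact that a monogenic subsemigroup contains exactly one idempotent (Lemma \ref{gena-one idempotent}). Suppose $f, f' \in E(S)$ satisfy $ff' = f'f$, and assume towards a contradiction that $f \neq f'$. Since these two distinct idempotents commute, they are adjacent in the commuting graph $P_c(S)$ directly by the definition of $P_c(S)$.

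Next I would feed this into the hypothesis $\Gamma(S) = P_c(S)$. As $f$ and $f'$ are adjacent in $P_c(S)$, the assumed equality of the two graphs forces them to be adjacent in the cyclic graph $\Gamma(S)$ as well. By the definition of $\Gamma(S)$, this means $\langle f, f' \rangle = \langle z \rangle$ for some $z \in S$; that is, $\langle f, f' \rangle$ is a monogenic subsemigroup of $S$.

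Finally, I would observe that $f$ and $f'$ are both idempotents lying inside this monogenic semigroup, since $f, f' \in \langle f, f' \rangle = \langle z \rangle$. Applying Lemma \ref{gena-one idempotent} to $\langle z \rangle$, which contains exactly one idempotent, we conclude $f = f'$, contradicting $f \neq f'$.

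I do not expect any genuine obstacle here: the proposition reduces to a direct combination of the definition of $P_c(S)$, the graph-equality hypothesis, and the uniqueness of the idempotent in a monogenic semigroup. The only point deserving a moment's attention is confirming that both idempotents genuinely sit within a \emph{single} monogenic subsemigroup $\langle z \rangle$ so that Lemma \ref{gena-one idempotent} is applicable, and this is immediate from $f, f' \in \langle f, f' \rangle$.
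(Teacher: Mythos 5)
Your proof is correct, and it differs from the paper's in one meaningful way: the step that produces the contradiction. Both arguments begin identically (assume $f \neq f'$, get $f \sim f'$ in $P_c(S)$ from commutativity, transfer this to $\Gamma(S)$ via the hypothesis), but the paper then cites an external structural result of Afkhami et al.\ --- that every \emph{connected component} of $\Gamma(S)$ contains exactly one idempotent --- whereas you unwind the definition of adjacency in $\Gamma(S)$ to get $f, f' \in \langle f, f' \rangle = \langle z \rangle$ and invoke the paper's own Lemma~\ref{gena-one idempotent} on the uniqueness of the idempotent in a monogenic subsemigroup. Your route is more elementary and self-contained: it needs only adjacency (not connectivity) and avoids the external citation entirely, relying on a lemma already proved in Section~2. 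The paper's route buys slightly more in principle --- since its cited fact is at the level of components, the same argument would show $f = f'$ whenever $f$ and $f'$ are merely joined by a path in $\Gamma(S)$, not just adjacent --- but for the proposition as stated that extra strength is not needed, and your localized argument is arguably the cleaner one within the logical economy of this paper.
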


\begin{proof}
If possible, let $f \neq f'$. Since $ff' = f'f$, we have $f \sim f'$ in $P_c(S)$. Consequently, by the hypothesis, we have $f \sim f'$ in $\Gamma(S)$ which is a contradiction of the fact that every connected component of $\Gamma(S)$ contains exactly  one idempotent (cf. \cite[Theorem 2.3]{a.afkhami2014cyclic}). Hence, $f = f'$.
\end{proof}

\begin{proposition}\label{index}
If the cyclic graph $\Gamma(S)$ is equal to $P_c(S)$, then for each $a \in S$, we have either $\langle a \rangle = M(m, r)$ with $m \in \{1, 2\}$ or $\langle a \rangle = M(3, r)$ with $r$ is odd.
\end{proposition}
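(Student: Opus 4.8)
The plan is to reduce this statement to the already-established characterization of when the cyclic graph coincides with the enhanced power graph. The key observation is that the hypothesis $\Gamma(S) = P_c(S)$ is strictly stronger than the condition we actually need, namely $\Gamma(S) = P_e(S)$, and the latter has been fully characterized in Theorem \ref{Gamma-equal-enhanced}.

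First I would invoke the chain of spanning subgraphs $\Gamma(S) \preceq P_e(S) \preceq P_c(S)$ from Lemma \ref{Spanning}. All three graphs share the common vertex set $S$, so they differ only in their edge sets, with $E(\Gamma(S)) \subseteq E(P_e(S)) \subseteq E(P_c(S))$. Under the hypothesis $\Gamma(S) = P_c(S)$, the outer two edge sets coincide, and a set sandwiched between two equal sets equals both; hence $E(\Gamma(S)) = E(P_e(S))$. Since the vertex sets already agree, this yields $\Gamma(S) = P_e(S)$.

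Next I would apply Theorem \ref{Gamma-equal-enhanced}, which asserts that $\Gamma(S) = P_e(S)$ holds if and only if, for each $a \in S$, the monogenic subsemigroup $\langle a \rangle$ has one of the forms $M(1, r)$, $M(2, r)$, or $M(3, r)$ with $r$ odd. Rephrasing the first two cases as $m \in \{1, 2\}$, this is precisely the conclusion asserted in the proposition, so the argument terminates.

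The proof is genuinely short because the substantive combinatorial work on index and period was already carried out in Theorem \ref{gamma-complete} and transferred to Theorem \ref{Gamma-equal-enhanced}. The only point requiring care is the sandwich step: one must note that equality of the two \emph{outer} members of the chain in Lemma \ref{Spanning} forces equality throughout, so that the full strength of the commuting-graph hypothesis is not needed and $\Gamma(S) = P_e(S)$ follows legitimately. There is no real computational obstacle here; indeed, this proposition is best viewed as an intermediate step toward the eventual classification of when $\Gamma(S) = P_c(S)$, for which the additional idempotent restriction from Proposition \ref{cyclic graph-commuting graph-ff'=f'f} will also be required.
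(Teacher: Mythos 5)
Your proof is correct, but it takes a different route from the paper's. The paper argues directly by contradiction: if some $\langle a \rangle$ is of the form $M(3,r)$ with $r$ even or $M(m,r)$ with $m \ge 4$, then (repeating the computation from the proof of Theorem \ref{gamma-complete}) $a^2$ and $a^3$ are not adjacent in $\Gamma(S)$, while $a^2 \sim a^3$ in $P_c(S)$ since powers of $a$ commute; this contradicts $\Gamma(S) = P_c(S)$. You instead observe that the hypothesis can be weakened before any combinatorics is done: by Lemma \ref{Spanning} the edge sets satisfy $E(\Gamma(S)) \subseteq E(P_e(S)) \subseteq E(P_c(S))$ on the common vertex set $S$, so $\Gamma(S) = P_c(S)$ forces $\Gamma(S) = P_e(S)$, and then the (earlier, so legitimately citable) Theorem \ref{Gamma-equal-enhanced} yields exactly the stated list of forms for $\langle a \rangle$. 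Your reduction is cleaner and avoids re-running the index/period argument, making explicit that only the middle equality $\Gamma(S) = P_e(S)$ is ever used; the paper's version is more self-contained at this point in that it exhibits the offending edge $\{a^2, a^3\}$ concretely, which is the same witness its proof of Theorem \ref{Gamma-equal-enhanced} uses anyway. The underlying mathematics coincides --- both ultimately rest on the non-adjacency of $a^2$ and $a^3$ in $\Gamma(S)$ established in Theorem \ref{gamma-complete} --- but your packaging as a pure reduction is a genuine and arguably preferable simplification.
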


\begin{proof}
Suppose that $\Gamma(S) = P_c(S)$. If possible, for some $a \in S$, let $\langle a \rangle$ is not of the given form. Then either $\langle a \rangle = M(3, r)$ with $r$ is even or $\langle a \rangle = M(m, r)$ with $m \geq 4$. Then by the similar argument used in the proof of Theorem \ref{gamma-complete}, in each of the case, we have $a^2$ is not adjacent to $a^3$. Clearly, $a^2 \sim a^3$ in $P_c(S)$. Thus $\Gamma(S) \neq P_c(S)$; a contradiction. Hence, the result holds.
\end{proof}

\begin{theorem}\label{cyclic-equal-commuting}
The cyclic graph $\Gamma(S)$ is equal to $P_c(S)$ if and only if every commutative subsemigroup of $S$ is monogenic.
\end{theorem}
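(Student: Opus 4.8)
The plan is to prove both directions by exploiting the structural results already established. For the forward direction, I would assume $\Gamma(S) = P_c(S)$ and let $T$ be any commutative subsemigroup of $S$. The goal is to show $T$ is monogenic. The natural tool is Theorem \ref{enhanced-complete}: $T$ is monogenic iff $P_e(T)$ is complete. Since $T$ is commutative, $P_c(T)$ is complete by Lemma \ref{Commutative-complete}. The key observation is that for any two distinct $x, y \in T$, since $xy = yx$ in $S$ as well, we have $x \sim y$ in $P_c(S)$; by hypothesis $x \sim y$ in $\Gamma(S)$, which means $\langle x, y \rangle = \langle z \rangle$ is monogenic in $S$. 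In particular $x, y \in \langle z \rangle$, so $x \sim y$ in $P_e(S)$, and hence $x \sim y$ in $P_e(T)$ (since the witness $z$ lies in $\langle x, y\rangle \subseteq T$, the adjacency is internal to $T$). Therefore $P_e(T)$ is complete and $T$ is monogenic.

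For the converse, I would assume every commutative subsemigroup of $S$ is monogenic and show $\Gamma(S) = P_c(S)$. Since $\Gamma(S) \preceq P_e(S) \preceq P_c(S)$ by Lemma \ref{Spanning}, it suffices to prove $P_c(S) \preceq \Gamma(S)$. So suppose $a \sim b$ in $P_c(S)$, i.e. $ab = ba$ with $a \ne b$. Consider the subsemigroup $\langle a, b \rangle$. Because $a$ and $b$ commute, every element of $\langle a, b \rangle$ can be written with all the $a$'s collected before the $b$'s, which shows $\langle a, b \rangle$ is commutative. By hypothesis it is therefore monogenic, say $\langle a, b \rangle = \langle z \rangle$ for some $z$. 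This is exactly the condition $a \sim b$ in $\Gamma(S)$, completing the inclusion.

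The main obstacle I anticipate is the converse, specifically the claim that $\langle a, b \rangle$ is commutative whenever $a$ and $b$ commute. This requires a short argument (a normal-form / induction on word length) establishing that in a two-generator semigroup whose generators commute, all elements commute; any element is a product of $a$'s and $b$'s, and using $ab = ba$ repeatedly one rearranges any word into the form $a^i b^j$ (or a pure power if one generator is absent), after which commutativity of two such normal forms is immediate. Care is needed here because $S$ is only a semigroup, not a monoid, so one works purely with positive powers and nonempty words, but the rearrangement argument goes through unchanged. Once this lemma-like step is in place, both directions close cleanly using Theorems \ref{enhanced-complete}, Lemma \ref{Commutative-complete}, and Lemma \ref{Spanning}.
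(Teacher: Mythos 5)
Your proof is correct and follows essentially the same route as the paper's: your converse is word-for-word the paper's argument (commuting $a,b$ generate a commutative, hence monogenic, subsemigroup, so $a\sim b$ in $\Gamma(S)$), and your forward direction rests on the same key observation the paper uses, namely that the monogenic witness $z$ satisfies $z\in\langle x,y\rangle\subseteq T$, so the adjacency is internal to $T$. The only (harmless) deviation is at the very end of the forward direction: you conclude that $P_e(T)$ is complete and invoke Theorem~\ref{enhanced-complete}, whereas the paper concludes that $\Gamma(T)$ is complete and appeals to the classification in Theorem~\ref{gamma-complete}; your choice is marginally more direct, and your extra care about why $\langle a,b\rangle$ is commutative is a standard word-rearrangement fact the paper simply asserts.
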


\begin{proof}
Let $\Gamma(S) = P_c(S)$ and $H$ be an arbitrary commutative subsemigroup of $S$. First, we prove that $\Gamma(H) = P_c(H)$. For that, let $x \sim y$ in $P_c(H)$, thus $xy = yx$. Consequently, we have $x \sim y$ in $P_c(S) = \Gamma(S)$. Thus, $\langle x, y \rangle = \langle z \rangle$ for some $z \in S$. Because of $z \in \langle x, y \rangle$, we get $z \in H$. Therefore, $x \sim y$ in $\Gamma(H)$ so that $P_c(H)$ is a subgraph of $\Gamma(H)$. As a result, $\Gamma(H) = P_c(H)$ (cf. Lemma \ref{Spanning}). Since $H$ is commutative, we have $P_c(H)$ is complete and so is $\Gamma(H)$. By Theorem \ref{gamma-complete}, $H$ is monogenic.

Conversely, suppose that every commutative subsemigroup of $S$ is monogenic. In order to prove $\Gamma(S) = P_c(S)$, it is sufficient to show $ P_c(S)  \preceq \Gamma(S)$ (cf. Lemma \ref{Spanning}). Let $a, b \in S$ such that $a \sim b$ in $P_c(S)$, we have $ab = ba$. Consequently, $\langle a, b \rangle$ is a commutative subsemigroup of $S$. By the hypothesis, $\langle a, b \rangle$ is a monogenic subsemigroup of $S$. Thus, $a \sim b$ in $\Gamma(S)$. Hence, we have the result.
\end{proof}

\begin{example}
Let $S = \{-1, 0, 1\}$ be a semigroup with respect to usual multiplication. Note that $0 \sim 1$ in $P_c(S)$ (cf. Lemma \ref{Commutative-complete}) but there is no edge between $0$ and $1$ in $P_e(S)$. Thus,  $P_c(S) \ne P_e(S)$. See Figure $4$.
\begin{figure}[h!]
	\centering
	\includegraphics[width=0.5 \textwidth]{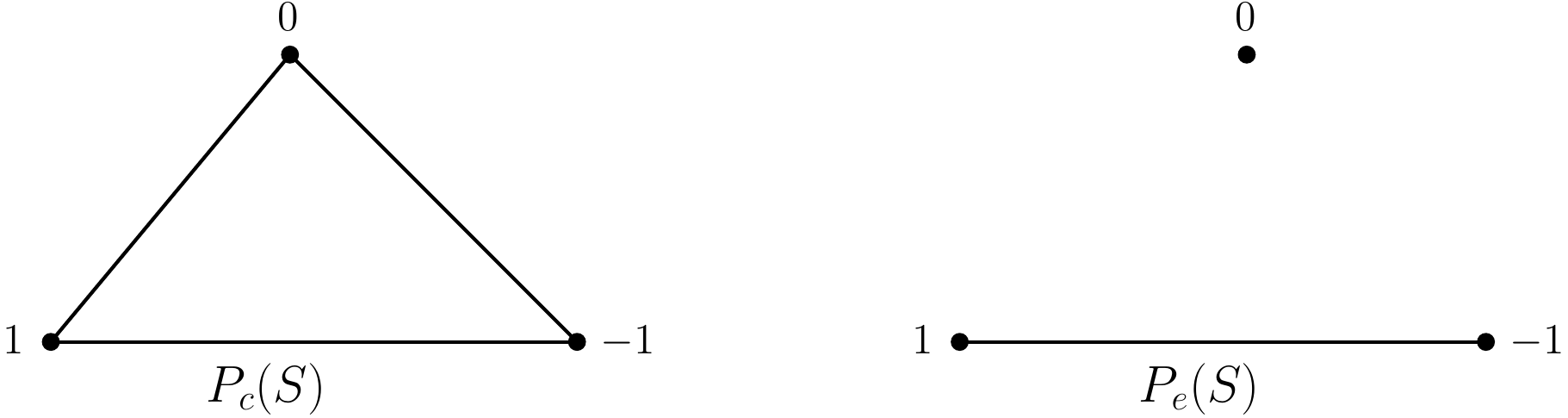}
	\caption{}
\end{figure}
\end{example}
	
\begin{remark}
Let $S$ be a commutative semigroup. Then $P_e(S) = P_c(S)$  if and only if $S$ is monogenic.
\end{remark}

Now for an idempotent $f$ in a semigroup $S$, define
$S_f = \{ a \in S \; : \; a^m= f \; \text{for some} \; m \in \mathbb N\}$.

\begin{remark}\label{classification-compoenent} Let $S$ be a finite semigroup. Then
$S = \underset{f \in E(S)}{\bigcup S_f}$ and for different $f, \;  f' \in E(S)$, we have $S_f \bigcap S_{f'} = \varnothing$.
\end{remark}

\begin{theorem}\label{enhanced = commuting}
The enhanced power graph $P_e(S)$ is equal to $P_c(S)$ if and only if the following  holds:
\begin{enumerate}\label{monogenic}
\item [(i)]  For $f, f' \in E(S)$ such that  $ff' = f'f$, we have  $f = f'$.
\item [(ii)] $S$ has no subgroup $C_p \times C_p$ for  prime $p$.
\item [(iii)] For $x, y \in S$ with  $xy = yx$ and atleast one of $m_x, m_y$ is greater than $1$, we have $x, y \in \langle z \rangle$ for some $z \in S$.
\end{enumerate}
\end{theorem}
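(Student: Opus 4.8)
The plan is to exploit the reformulation coming from Lemma~\ref{Spanning}: since $P_e(S) \preceq P_c(S)$ always holds, the equality $P_e(S) = P_c(S)$ is equivalent to the single condition that \emph{every pair of distinct commuting elements lies in a common monogenic subsemigroup}, i.e. for distinct $x, y \in S$ with $xy = yx$ there is $z \in S$ with $x, y \in \langle z \rangle$. I would prove both implications against this reformulation.

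For the forward direction, assume $P_e(S) = P_c(S)$. Condition (iii) is then immediate: a commuting pair $x, y$ (distinct; the case $x = y$ being trivial) is adjacent in $P_c(S) = P_e(S)$, so $x, y \in \langle z \rangle$ for some $z$. For (i), if $f \ne f'$ were commuting idempotents, the reformulation would place $f, f' \in \langle z \rangle$; but $\langle z \rangle$ contains exactly one idempotent (Lemma~\ref{gena-one idempotent}), forcing $f = f'$. For (ii), suppose $S$ contained a subgroup $H \cong C_p \times C_p$, and pick $x, y \in H$ generating two distinct subgroups of order $p$, so that the abelian group they generate is non-cyclic; they commute, hence $x, y \in \langle z \rangle$ for some $z$ with $\langle z \rangle = M(m,r)$. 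The key technical point is that a group element of $S$ lying in $\langle z \rangle$ must lie in the cyclic kernel $K_z$: writing $x = z^i$, its idempotent power $x^p$ equals the unique idempotent of $\langle z \rangle$ (which sits in $K_z$, hence is $z^e$ with $e \ge m$), and the identity law $x z^e = x$ gives $z^{i+e} = z^i$, forcing $i \ge m$. Thus $x, y \in K_z$, a cyclic group, contradicting the non-cyclicity of $\langle x, y \rangle$.

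For the converse, assume (i), (ii), (iii) and take distinct commuting $x, y$. If $\max\{m_x, m_y\} > 1$, condition (iii) directly yields $z$ with $x, y \in \langle z \rangle$. Otherwise $m_x = m_y = 1$, so $\langle x \rangle$ and $\langle y \rangle$ are cyclic groups; let $e_x, e_y$ be their identities (the unique idempotents, each a power of $x$, resp.\ $y$). Since $x$ and $y$ commute, so do $e_x$ and $e_y$, whence $e_x = e_y =: e$ by (i). Consequently $x$ and $y$ both lie in the maximal subgroup of $S$ at $e$, so the subsemigroup $\langle x, y \rangle$ is in fact the abelian subgroup they generate there. By (ii), $S$—hence $\langle x, y \rangle$—contains no $C_p \times C_p$; a finite abelian group with this property is cyclic, so $\langle x, y \rangle = \langle w \rangle$ for some $w$. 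As a cyclic subgroup this is a monogenic subsemigroup (Lemma~\ref{cyclic implies monogenic}), so $x, y \in \langle w \rangle$ and $x \sim y$ in $P_e(S)$.

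I expect the main obstacle to be the two places where the group-theoretic structure of $S$ must be extracted from the semigroup hypotheses: in the forward direction, verifying that group elements of $\langle z \rangle$ are confined to $K_z$ (so that commuting generators of $C_p \times C_p$ cannot share a cyclic kernel), and in the converse, justifying that commuting index-one elements live in a \emph{common} maximal subgroup, through the equality of their identity idempotents supplied by (i), before the abelian cyclicity criterion furnished by (ii) can be applied. The remaining steps are routine once these structural facts are in place.
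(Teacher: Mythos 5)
Your proposal is correct and follows essentially the same route as the paper's proof: condition (i) via the unique-idempotent lemma, condition (ii) by trapping commuting generators of $C_p \times C_p$ inside the kernel $K_z$ of a monogenic subsemigroup, and the converse by splitting on indices, using (i) to identify the two idempotents and (ii) to force the abelian group $\langle x, y \rangle$ to be cyclic. The only differences are cosmetic: you prove explicitly the containment $x, y \in K_z$ that the paper merely asserts from $m_x = m_y = 1$, and you replace the paper's $S_f$-decomposition and its explicit $C_r \times C_s$/Cauchy argument with the equivalent standard facts about maximal subgroups at an idempotent and finite abelian groups without $C_p \times C_p$ subgroups.
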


\begin{proof}
First, suppose that $P_e(S) = P_c(S)$. In order to prove (i), let $f, f' \in E(S), \; ff' = f'f$ so that $f \sim f'$ in $P_c(S)$. Since $P_e(S) = P_c(S)$, we have $f \sim f'$ in $P_e(S)$. Thus $f, f' \in \langle z \rangle$ for some $z \in S$. Consequently, $f = f'$ (cf. Lemma \ref{gena-one idempotent}). Next, we shall show that  $S$ has no subgroup of the form $C_p \times C_p$ for some prime $p$. On contrary, we assume that $S$ has a subgroup $C_p \times C_p$ for some  prime $p$. For each $x \in C_p \times C_p$, we have  $o(x) = 1, \; p, \; p^2$. Since $C_p \times C_p$ is a non-cyclic subgroup of $S$, we get  $o(x) = p$ for all  $x \in (C_p \times C_p) \setminus \{\mathfrak{e}\}$, where $\mathfrak{e}$ is the identity element of the group $C_p \times C_p$.  For $x \in (C_p \times C_p) \setminus \{\mathfrak{e}\}$, we get $\langle x \rangle  \subsetneq C_p \times C_p$.  Thus, there exists $y \in C_p \times C_p$ such that $y \notin \langle x \rangle$. Note that $\langle x \rangle \cap \langle y \rangle = \{\mathfrak{e}\}$. Otherwise, if there exists a nonidentity element  $z \in \langle x \rangle \cap \langle y \rangle$, then we have $\langle z \rangle \subseteq \langle x \rangle$ and $\langle z \rangle \subseteq \langle y \rangle$. Since $o(x) = o(y) = o(z) = p$, we get $\langle x \rangle = \langle z \rangle  = \langle y \rangle$. Consequently,  $y \in \langle x \rangle$; a contradiction. Further, note that $|\langle x, y \rangle| =  |\langle x \rangle|\cdot |\langle y \rangle | = p^2 = |C_p \times C_p|$ and $\langle x, y \rangle \subseteq C_p \times C_p$, we get $\langle x, y \rangle = C_p \times C_p$. Thus, $xy = yx$ so that $x \sim y$ in $P_c(S)$. Since $P_e(S) = P_c(S)$, we get $x, y \in \langle z \rangle$ for some $z \in S$. Also, we have $x, y \in C_p \times C_p$ so that $m_x = m_y =1$. It follows that $x, y \in K_z$ which is a cyclic subgroup of $S$. Thus $\langle x, y \rangle = C_p \times C_p$ is a cyclic subgroup of $S$; a contradiction. Thus, (ii) holds. To prove (iii), let $x, y \in S, \; xy = yx$ and atleast one of $m_x, m_y$ is greater than $1$. Thus, $x \sim y$ in $P_c(S)$. Since $P_e(S) = P_c(S)$, we have $x \sim y$ in $P_e(S)$. Hence $x, y \in \langle z \rangle$ for some $z \in S$.

Conversely, suppose $S$ satisfies (i), (ii), and (iii). Since $P_e(S) \preceq P_c(S)$ (cf. Lemma \ref{Spanning}), we need to show that $P_c(S)$ is a subgraph of $P_e(S)$. Let $x \sim y$ in $P_c(S)$ so that $xy = yx$. If atleast one of $m_x, m_y$ is greater than $1$. Then by (iii), there exists $z \in S$ such that $x, y \in \langle z \rangle$  so that $x \sim y$ in $P_e(S)$. Hence, $P_e(S) = P_c(S)$. If $m_x = m_y = 1$, then $\langle x  \rangle$ and $\langle  y \rangle$ are the cyclic subgroup of $S$. Since, $x, y \in S$, by Remark \ref{classification-compoenent}, $x \in S_f, \; y \in S_{f'}$ for some $f, f' \in E(S)$. Then there exist $m, n \in \mathbb N$ such that $x^m = f, \; y^n = f'$. Note that $(xy)^{mn} = x^{mn} y^{mn} = (x^m)^n (y^n)^m = ff'$ and $(yx)^{mn} = f'f$. Since $xy = yx$, we have $ff' = f'f$. By (i), we get $f = f'$.
Since $f \in E(S)$ and $f \in \langle x \rangle \cap \langle y \rangle$, thus $f$ is the identity element of the subgroups $\langle x \rangle$ and $\langle  y \rangle$. Consequently,
 $f$ becomes the identity element of $\langle x, y \rangle$. Because of $xy = yx$, note that
 \[(x^iy^j)^{mn} = (x^i)^{mn}(y^j)^{mn} = (x^m)^{in}(y^n)^{jm} = f^{in} f^{jm}  = f\]
 so that $x^iy^j \in S_f$. As a result, we have $\langle x, y \rangle \subseteq S_f$. Thus $\langle x, y \rangle$ contains exactly one idempotent $f$. Since $\langle x, y \rangle$ is a finite monoid containing exactly one idempotent so that $\langle x, y \rangle$ becomes a subgroup of $S$, and hence is the direct product of two cyclic groups, say $C_r \times C_s$ for some $r,s \in \mathbb N$. Let gcd$(r, s) = d$. If $d = 1$, then $C_r \times C_s$  is a cyclic subgroup  which makes $\langle x, y \rangle$ to a cyclic subgroup of $S$. Consequently, $x, y \in \langle z \rangle$ for some $z \in S$. Thus, $x \sim y$ in $P_e(S)$.

 If $d > 1$, then there exists a prime $p$ such that $p$ divides $r$ and $s$. By Cauchy's theorem, there exist $x \in C_r$ and $y \in C_s$ such that $o(x) = o(y) = p$. Consequently, we get $(x, e_2)$ and $(e_1, y)$ in $C_r \times C_s$ such that $o(x, e_2) = o(e_1, y) = p$, where  $e_1, e_2$ are the identity elements of $C_r$ and $C_s$, respectively. Note that  $(x, e_2)$ and $(e_1, y)$ commute with each other and $\langle(x, e_2)\rangle  \cap \langle (e_1, y) \rangle  = \{(e_1, e_2)\}$. It follows that $|\langle (x, e_2), (e_1, y) \rangle | = p^2$. Now $\big( (x, e_2)^i (e_1, y)^j \big)^p = (e_1, e_2)$ so that there does not exist an element of order $p^2$ in the group $\langle (x, e_2), (e_1, y) \rangle$. Thus, $\langle (x, e_2), (e_1, y) \rangle$ is non-cyclic group of order $p^2$. Consequently, $\langle (x, e_2), (e_1, y) \rangle$ is of the form $C_p \times C_p$; a contradiction of (ii).
\end{proof}

Now, we have the following corollary of the above theorem.

\begin{corollary}{\rm \cite[Theorem 30]{a.cameron2017}}
Let $G$ be a finite group. Then the enhanced power graph $P_e(G)$ is equal to $P_c(G)$ if and only if $G$ has no subgroup $C_p \times C_p$ for prime $p$.
\end{corollary}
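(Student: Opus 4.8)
The plan is to obtain this corollary as a direct specialization of Theorem \ref{enhanced = commuting} to the case $S = G$, a finite group. The strategy is to verify that two of the three conditions in that theorem become automatic when the semigroup is a group, so that only the condition on $C_p \times C_p$ subgroups survives. First I would recall the two structural facts about groups established in the preliminaries: a finite group $G$ has exactly one idempotent, namely its identity element $e$ (so $E(G) = \{e\}$), and every element $a \in G$ has index $m_a = 1$, since $\langle a \rangle$ is a cyclic subgroup of $G$.

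With these in hand, I would check that conditions (i) and (iii) of Theorem \ref{enhanced = commuting} hold for $S = G$ regardless of the structure of $G$. For condition (i): since $E(G) = \{e\}$ is a singleton, any $f, f' \in E(G)$ are forced to equal $e$, so the implication $ff' = f'f \Rightarrow f = f'$ holds trivially. For condition (iii): the hypothesis requires at least one of $m_x, m_y$ to exceed $1$, but in a group every element has index $1$; hence no pair $x, y$ satisfies the premise, and the implication in (iii) is vacuously true.

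Consequently, when $S = G$ the triple of conditions in Theorem \ref{enhanced = commuting} collapses to condition (ii) alone, namely that $G$ has no subgroup isomorphic to $C_p \times C_p$ for any prime $p$. Applying the theorem then gives precisely the stated equivalence: $P_e(G) = P_c(G)$ if and only if $G$ has no subgroup $C_p \times C_p$.

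I do not anticipate a genuine obstacle here, as the corollary is a clean specialization rather than an independent argument; the only point requiring care is the explicit verification that (i) and (iii) degenerate in the group setting, and both reductions rest squarely on the two elementary facts ($E(G) = \{e\}$ and $m_a = 1$) that are already recorded in the preliminaries. The essential mathematical content — that noncommuting obstruction in the group case is carried entirely by the noncyclic $p$-subgroups — remains housed in the proof of Theorem \ref{enhanced = commuting} itself.
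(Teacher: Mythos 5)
Your proposal is correct and matches the paper's own (implicit) argument: the paper states this corollary as a direct consequence of Theorem \ref{enhanced = commuting}, and the intended justification is exactly your specialization, with condition (i) trivial since $E(G)=\{e\}$ and condition (iii) vacuous since every element of a finite group has index $1$ --- both facts recorded in the paper's preliminaries. Nothing is missing.
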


\begin{example}For any integer $n \ge 1$, let $[n] = \{1, 2, \ldots, n\}$. The \emph{Brandt semigroup} $(B_n, \cdot)$, where $B_n = ([n]\times[n])\cup \{ 0\}$ and the operation \lq$\cdot$\rq $\;$ is given by
	\[ (i,j) \cdot (k,l) =
	\left\{\begin{array}{cl}
	(i,l) & \text {if $j = k$;}  \\
	0    & \text {if $j \neq k $}
	\end{array}\right.  \]
	and, for all $\alpha \in B_n$, $\alpha \cdot 0 = 0 \cdot \alpha = 0$.
For $S = B_2$, note that $0 \sim (1, 1)$ in $P_c(S)$ whereas there is no edge between $0$ and $(1, 1)$ in ${\rm Pow}(S)$. Hence, $P_c(S) \ne {\rm Pow}(S)$. See Figure $5$.
	\begin{figure}[h!]
		\centering
		\includegraphics[width=0.7 \textwidth]{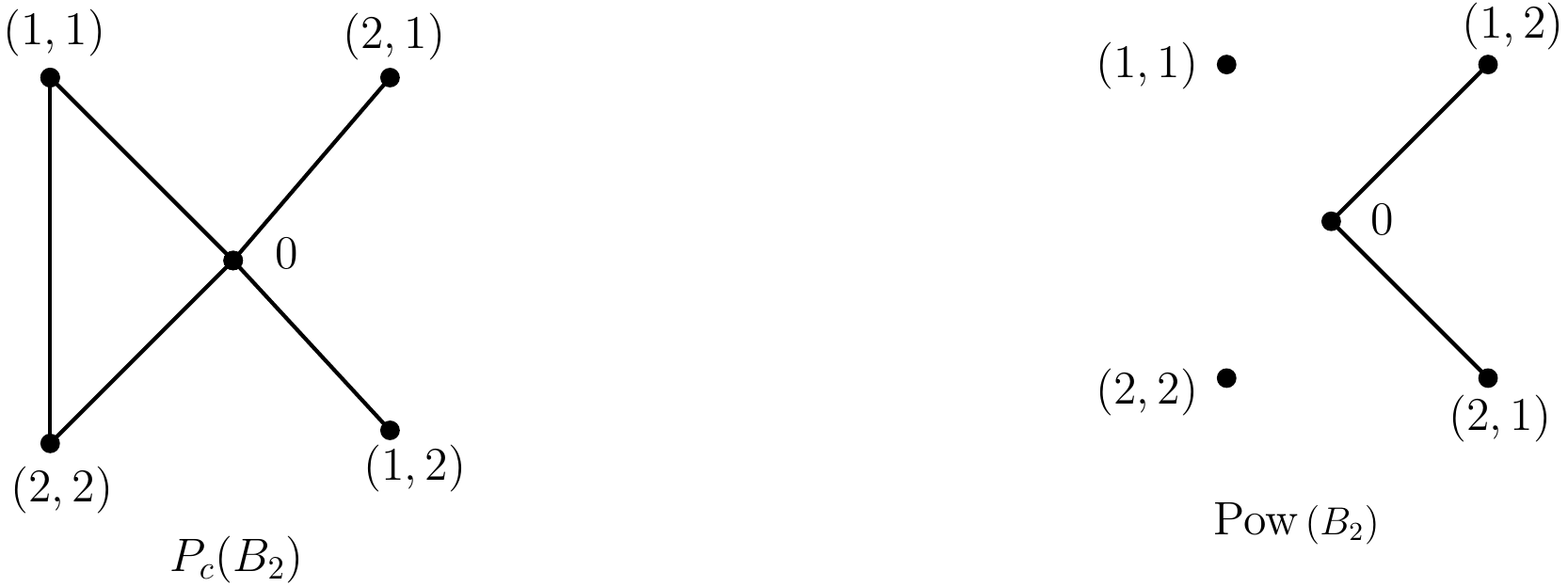}
		\caption{}
	\end{figure}
\end{example}

\begin{theorem}\label{Power = commuting}
The power graph ${\rm Pow}(S)$  is equal to the commuting graph $P_c(S)$ if and only if
\end{theorem}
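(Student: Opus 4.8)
The plan is to reduce the two-sided equality to the two successive equalities already settled in the paper, using the spanning-subgraph chain of Lemma \ref{Spanning}. Since ${\rm Pow}(S)$, $P_e(S)$ and $P_c(S)$ all have the same vertex set $S$, the relation ${\rm Pow}(S) \preceq P_e(S) \preceq P_c(S)$ amounts to the edge-set inclusions $E({\rm Pow}(S)) \subseteq E(P_e(S)) \subseteq E(P_c(S))$. Hence ${\rm Pow}(S) = P_c(S)$ if and only if both ${\rm Pow}(S) = P_e(S)$ and $P_e(S) = P_c(S)$ hold: the forward direction squeezes the middle graph between two graphs that are equal, and the backward direction is immediate. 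This single observation is the engine of the proof.

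Next I would translate each of the two equalities through the theorems already proved. By Theorem \ref{enhanced-equal-power}, the equality ${\rm Pow}(S) = P_e(S)$ is equivalent to the condition that for every $a \in S$ either $\langle a \rangle = M(m, p^n)$ with $m \in \{1,2\}$, or $\langle a \rangle = M(3, p^n)$ with $p$ an odd prime. By Theorem \ref{enhanced = commuting}, the equality $P_e(S) = P_c(S)$ is equivalent to its three conditions: distinct commuting idempotents coincide, $S$ has no subgroup $C_p \times C_p$ for a prime $p$, and any two commuting elements at least one of whose indices exceeds $1$ lie in a common monogenic subsemigroup $\langle z \rangle$. The characterization sought is therefore exactly the conjunction of these, so I would list the per-element prime-power-form condition together with the three conditions of Theorem \ref{enhanced = commuting}, and then prove the equivalence by quoting the two theorems in both directions.

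The hard part is essentially nil, since all the genuine content has been packaged into Theorems \ref{enhanced-equal-power} and \ref{enhanced = commuting}; the only points requiring care are bookkeeping and a redundancy check. I would confirm that the per-element form condition does not already entail the absence of $C_p \times C_p$: each nonidentity element of $C_p \times C_p$ generates $M(1,p)$, which is an admissible form, so the exclusion of $C_p \times C_p$ must be imposed separately rather than being a consequence of the index/period restriction. I would likewise verify that the index bound $m \le 3$ coming from Theorem \ref{enhanced-equal-power} is consistent with, and not contradicted by, the commuting-pair clause of Theorem \ref{enhanced = commuting}, so that the combined list of hypotheses is mutually compatible and none of its clauses is superfluous.
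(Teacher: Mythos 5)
Your proposal is logically correct, but it takes a different decomposition than the paper and, as a consequence, lands on a different (though equivalent) form of the characterization. The paper's proof is a one-liner: it applies Lemma \ref{Spanning} to squeeze the \emph{cyclic} graph in the chain ${\rm Pow}(S) \preceq \Gamma(S) \preceq P_c(S)$, so that ${\rm Pow}(S) = P_c(S)$ holds if and only if ${\rm Pow}(S) = \Gamma(S)$ and $\Gamma(S) = P_c(S)$, and then quotes Theorem \ref{power-cyclic-eq} and Theorem \ref{cyclic-equal-commuting}; this immediately yields the theorem's actual conditions, namely (i) every cyclic subgroup of $S$ has prime power order and (ii) every commutative subsemigroup of $S$ is monogenic. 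You instead squeeze the \emph{enhanced power} graph and quote Theorems \ref{enhanced-equal-power} and \ref{enhanced = commuting}; the sandwich logic is identical and your application of both theorems is accurate, but the resulting right-hand side is a four-clause condition (the per-element $M(m,p^n)$ index/period forms, plus the commuting-idempotent, $C_p\times C_p$, and commuting-pair clauses), which is equivalent to but does not literally match the paper's statement. So as a blind reconstruction your argument is sound, and your non-redundancy check (e.g., that elements of $C_p \times C_p$ generate admissible $M(1,p)$'s, so clause (ii) of Theorem \ref{enhanced = commuting} is not implied by the form condition) is a genuine point in favor of your version; but to prove the theorem \emph{as stated} you would still owe a verification that your conjunction is equivalent to the paper's (i) and (ii), and the natural way to do that is exactly the paper's route through $\Gamma(S)$, where Theorem \ref{power-cyclic-eq} converts the per-element form condition into the global prime-power condition on cyclic subgroups, and Theorem \ref{cyclic-equal-commuting} collapses everything on the commuting side into the single clean statement that commutative subsemigroups are monogenic. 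In short: same engine, different middle graph; the paper's choice buys the cleaner global characterization, yours buys an equally rigorous but bulkier one.
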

\begin{enumerate}
\item[(i)] the order  of every cyclic subgroup of $S$ is of prime power.
\item[(ii)] every commutative subsemigroup of $S$ is monogenic.
\end{enumerate}

\begin{proof}
In view of Lemma \ref{Spanning}, the proof is strightforward by Theorems \ref{power-cyclic-eq} and \ref{cyclic-equal-commuting}.
\end{proof}

\begin{lemma}\label{monogenic implies no subgroup Cp}
Let $G$ be a nontrivial group and its every cyclic subgroup has prime power order. Then every commutative subgroup of $G$ is cyclic if and only if $G$ has no subgroup of the form $C_p \times C_p$, for some prime $p$.
\end{lemma}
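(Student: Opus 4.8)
The statement is a biconditional, so the plan is to treat the two implications separately; throughout, $H$ will denote a commutative subgroup of the finite group $G$, hence a finite abelian group.

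For the implication ``every commutative subgroup is cyclic $\Rightarrow$ $G$ has no subgroup $C_p \times C_p$'', I would argue by contrapositive: the group $C_p \times C_p$ is itself commutative and non-cyclic, so a subgroup of $G$ isomorphic to $C_p \times C_p$ would be a commutative subgroup of $G$ that is not cyclic, immediately contradicting the hypothesis. This direction requires no computation.

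For the converse, assume $G$ has no subgroup $C_p \times C_p$ and let $H$ be a commutative subgroup. First I would use the standing assumption to reduce $H$ to a $p$-group: each $h \in H$ generates a cyclic subgroup $\langle h \rangle$, which by hypothesis has prime power order, so $o(h)$ is a prime power. If $H$ contained elements $x, y$ of orders $p^a$ and $q^b$ for distinct primes $p \neq q$, then, since $H$ is abelian and $o(x), o(y)$ are coprime, $xy$ would have order $p^a q^b$, yielding a cyclic subgroup of $G$ whose order is not a prime power---contradicting the standing hypothesis. Hence every element of $H$ has order a power of a single prime $p$, so $H$ is a finite abelian $p$-group.

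It then remains to show that a non-cyclic finite abelian $p$-group contains a copy of $C_p \times C_p$. Writing $H \cong C_{p^{a_1}} \times \cdots \times C_{p^{a_k}}$ via the fundamental theorem of finite abelian groups, non-cyclicity forces $k \geq 2$; selecting the unique subgroup of order $p$ inside each of the first two factors produces a subgroup isomorphic to $C_p \times C_p$ sitting inside $H \leq G$, contradicting our assumption. Therefore $H$ must be cyclic, which completes the converse. The only genuinely non-elementary ingredient---and hence the main point to get right---is this last step, where the fundamental theorem (equivalently, the fact that a finite abelian $p$-group is cyclic precisely when it has a unique subgroup of order $p$) is invoked; the reduction to a single prime is pure bookkeeping resting on the prime-power hypothesis.
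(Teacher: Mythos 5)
Your proof is correct, and for the harder direction it takes a genuinely different route from the paper. For the forward implication both arguments are essentially the same (yours is even slightly cleaner: you note directly that $C_p \times C_p$ is a commutative non-cyclic subgroup, whereas the paper redundantly re-derives generators $x, y$ with $\langle x, y \rangle = C_p \times C_p$). For the converse, the paper does \emph{not} use the fundamental theorem of finite abelian groups on all of $H$: it picks $x \in H$ of maximal order, shows each two-generated subgroup $\langle x, y \rangle$ is cyclic (being a direct product of two cyclic groups with no $C_p \times C_p$ inside, by the argument from Theorem \ref{enhanced = commuting}), then invokes the prime-power hypothesis together with Theorem \ref{power graph of G - M K Sen} to conclude ${\rm Pow}(\langle x, y \rangle)$ is complete, hence $x \in \langle y \rangle$ or $y \in \langle x \rangle$, and finally uses maximality of $o(x)$ to get $H = \langle x \rangle$. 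Your route instead first reduces $H$ to an abelian $p$-group (via the order of a product of commuting elements of coprime orders) and then applies the structure theorem to exhibit $C_p \times C_p$ inside any non-cyclic abelian $p$-group. What each buys: the paper's argument recycles machinery already built in the paper (the power-graph completeness criterion and the two-generated case of the structure theorem) and stays ``local'' to pairs of elements; yours is more self-contained and, notably, exposes that the standing hypothesis on cyclic subgroups is nearly superfluous --- indeed, using the invariant-factor form of the fundamental theorem ($H \cong C_{n_1} \times \cdots \times C_{n_k}$ with $n_1 \mid n_2 \mid \cdots \mid n_k$), any non-cyclic finite abelian group already contains $C_p \times C_p$ for any prime $p$ dividing $n_1$, so even your reduction to a single prime could be dispensed with. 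By contrast, the paper's proof genuinely needs the prime-power hypothesis, since the completeness of the power graph is what drives its comparability argument.
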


\begin{proof}
Suppose every commutative subgroup of $G$ is cyclic. On contrary, let $G$ has a subgroup of the form $C_p \times C_p$, where $p$ is a prime. Then by the similar argument used in the proof of Theorem \ref{enhanced = commuting}, there exist $x, y \in C_p \times C_p$  such that $\langle x, y \rangle = C_p \times C_p$.  As a result, we have a commutative subgroup $\langle x, y \rangle$ of $G$ which is non-cyclic; a contradiction. Thus, $G$ has no subgroup of the form $C_p \times C_p$, for some prime $p$.

Conversely, suppose $G$ has no subgroup of the form $C_p \times C_p$. Let $H$ be an arbitrary commutative subgroup of $G$. To prove $H$ is cyclic i.e. $H = \langle x \rangle$, for some $x \in H$, we choose an element $x \in H$ such that $o(x)$ is maximum. First, we shall show that for an arbitrary $y \in H$, we have either $x \in \langle y \rangle$ or $y \in \langle x \rangle$. For $y \in H$, we get $\langle x, y \rangle$ is a commutative subgroup of $H$. Consequently, $ \langle x, y \rangle$ is a cyclic subgroup of $H$ (see proof of Theorem \ref{enhanced = commuting}). By the hypothesis, $|\langle x, y \rangle| = q^n$, where $q$ is a prime and $n \in \mathbb{N}$. Then by Theorem \ref{power graph of G - M K Sen}, ${\rm Pow}(\langle x, y \rangle)$ is complete  so that either $x \in \langle y \rangle$ or $y \in \langle x \rangle$. Now we claim that $H = \langle x \rangle$. If possible, let $\langle x \rangle \subsetneq H$. Then there exists $y \in H$ such that $y \notin \langle x \rangle$. We must have $x \in \langle y \rangle$. Because of $o(x)$ is maximum, we have $\langle x \rangle = \langle y \rangle$; a contradiction of $y \notin \langle x \rangle$. Hence, the subgroup $H$ is cyclic.
\end{proof}

In view of Lemma \ref{monogenic implies no subgroup Cp}, we have the following corollary of the Theorem \ref{Power = commuting}.

\begin{corollary}
The power graph {\rm Pow$(G)$} of a group $G$ is equal to $P_c(G)$ if and only if
\begin{enumerate}
\item[(i)] every cyclic subgroup of $G$ has prime power order.
\item[(ii)] $G$ has no subgroup of the form $C_p \times C_p$ for some prime $p$.
\end{enumerate}
\end{corollary}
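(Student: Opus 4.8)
The plan is to derive this corollary directly from Theorem \ref{Power = commuting} by translating its two conditions into the language of groups, using Lemma \ref{monogenic implies no subgroup Cp} to rephrase the second one. First I would recall that Theorem \ref{Power = commuting}, applied to the group $G$ regarded as a finite semigroup, asserts that ${\rm Pow}(G) = P_c(G)$ holds precisely when (a) every cyclic subgroup of $G$ has prime power order, and (b) every commutative subsemigroup of $G$ is monogenic. Condition (a) is already condition (i) of the corollary, so the work lies entirely in showing that, in the presence of (a), condition (b) is equivalent to the absence of a subgroup of the form $C_p \times C_p$.

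Next I would observe that in a finite group the semigroup-theoretic hypotheses collapse to group-theoretic ones. A commutative subsemigroup $H$ of $G$ is finite, and a finite subsemigroup of a group is itself a subgroup; hence ``commutative subsemigroup'' may be replaced throughout by ``commutative subgroup''. Likewise, every element $a$ of a group has index $m_a = 1$, so that $\langle a \rangle = M(1, r)$ is a cyclic subgroup, and conversely every cyclic subgroup is a monogenic subsemigroup by Lemma \ref{cyclic implies monogenic}. Thus the monogenic subsemigroups of $G$ are exactly its cyclic subgroups, and condition (b) reads: every commutative subgroup of $G$ is cyclic.

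With this translation in hand I would invoke Lemma \ref{monogenic implies no subgroup Cp}. Assuming $G$ is nontrivial and that (i) holds, the lemma states that every commutative subgroup of $G$ is cyclic if and only if $G$ has no subgroup of the form $C_p \times C_p$. Hence, under (i), condition (b) of Theorem \ref{Power = commuting} is equivalent to condition (ii) of the corollary, and combining the two directions yields the stated equivalence. The trivial group is dispatched separately as a degenerate case: there both graphs reduce to the single-vertex graph, and conditions (i) and (ii) hold vacuously, so the conclusion is consistent.

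The only point requiring genuine care---the main obstacle---is this faithful transfer between the semigroup and group vocabularies, since Theorem \ref{Power = commuting} is phrased in terms of subsemigroups whereas both the corollary and Lemma \ref{monogenic implies no subgroup Cp} speak of subgroups. I would therefore take some care to justify that finiteness forces a commutative subsemigroup of a group to be a subgroup and that monogenicity coincides with cyclicity in this setting, and to account for the nontriviality hypothesis of Lemma \ref{monogenic implies no subgroup Cp} by treating the trivial group directly.
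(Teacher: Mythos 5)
Your proposal is correct and follows exactly the paper's route: the paper derives this corollary by combining Theorem \ref{Power = commuting} with Lemma \ref{monogenic implies no subgroup Cp}, which is precisely your plan. You simply make explicit the bookkeeping the paper leaves implicit (finite commutative subsemigroups of a group are subgroups, monogenic coincides with cyclic there, and the trivial group is checked separately), all of which is sound.
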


\noindent\textbf{Acknowledgement:} The second authors wishes to acknowledge the support of MATRICS grant (MTR/2018/000779) funded by SERB.

\end{document}